\documentclass[a4paper, 11pt]{amsart}
\usepackage[
  margin=1.9cm,
  includefoot,
  footskip=30pt,
]{geometry}
\sloppy
\usepackage{amsmath,amssymb,amscd}
\usepackage{amsfonts}
\usepackage[english]{babel}
\usepackage[leqno]{amsmath}
\usepackage{amssymb,amsthm}
\usepackage{mathrsfs} 
\usepackage{amscd}
\usepackage{enumerate}
\usepackage{epsfig}
\usepackage{relsize}
\usepackage{layout}
\usepackage[usenames,dvipsnames]{xcolor}
\usepackage[backref=page]{hyperref}
\usepackage{tikz}
\hypersetup{
 colorlinks,
 citecolor=Green,
 linkcolor=Red,
 urlcolor=Blue}
\usepackage[matrix,arrow,tips,curve]{xy}
\input{xy}
\xyoption{all}
\definecolor{darkgreen}{rgb}{0.0, 0.7, 0.0}
\definecolor{purple}{rgb}{0.5, 0.0, 0.5}
\definecolor{red}{rgb}{0.8, 0.2, 0.0}

\newtheorem{thm}{Theorem}[section]
\newtheorem{bthm}{Theorem}

\newtheorem{lemma}[thm]{Lemma}

\newtheorem{prop}[thm]{Proposition}

\numberwithin{equation}{section}
\setcounter{tocdepth}{1}
\theoremstyle{definition}
\newtheorem{defi}[thm]{Definition}

\newtheorem{notation}[thm]{Notation}

\theoremstyle{remark}
\newtheorem{remark}[thm]{Remark}
\newtheorem{remarks}[thm]{Remarks}

\newcommand{\Pic}{\operatorname{Pic}}

\def \Im{{\rm Im}}

\def \PP{\mathbb{P}}
\def \ZZ{\mathbb{Z}}

\def \F{\mathcal F}

\def \L{\mathcal L}
\def \E{\mathcal E}
\def \G{\mathcal G}
\def \H{\mathcal H}
\def \U{\mathcal U}
\def\O{\mathcal O}

\def\M0{\mathcal M^0}

% blackboard bolds

% scripts

% bold

% rank, Chern character and Todd class

%%%%%%%%%%%%%%%%%%%%%%

%Serie lineari

%CONI

%CONI TAUTOLOGICI 

%HULLS

%GreekLetters

\begin{document}

\title[On the positivity of the first Chern class of an Ulrich vector bundle]{On the positivity of the first Chern class of an Ulrich vector bundle}

\author[A.F. Lopez]{Angelo Felice Lopez*}

\thanks{* Research partially supported by  PRIN ``Geometria delle variet\`a algebriche'' and GNSAGA-INdAM}

\address{\hskip -.43cm Dipartimento di Matematica e Fisica, Universit\`a di Roma
Tre, Largo San Leonardo Murialdo 1, 00146, Roma, Italy. e-mail {\tt lopez@mat.uniroma3.it}}

\thanks{{\it Mathematics Subject Classification} : Primary 14J60. Secondary 14J40, 14J30.}

\dedicatory{\normalsize \dag \ Dedicated to all medical workers and people that sacrificed their lives during the Covid-19 pandemic} 

\begin{abstract} 
We study the positivity of the first Chern class of a rank $r$ Ulrich vector bundle $\E$ on a smooth $n$-dimensional variety $X \subseteq \PP^N$. We prove that $c_1(\E)$ is very positive on every subvariety not contained in the union of lines in $X$. In particular if $X$ is not covered by lines we have that $\E$ is big and $c_1(\E)^n \ge r^n$. Moreover we classify rank $r$ Ulrich vector bundles $\E$ with $c_1(\E)^2=0$ on surfaces and with $c_1(\E)^2=0$ or $c_1(\E)^3=0$ on threefolds (with some exceptions).
\end{abstract}

\maketitle

\section{Introduction}
\label{intro}

Let $X \subseteq \PP^N$ be a smooth projective variety. A classical method to study the geometry of $X$ is to produce interesting vector bundles with some kind of positivity on $X$. These in turn give rise to meaningful subvarieties shedding light on the geometrical properties of $X$.

Among such bundles, in recent years a distinguished role has been achieved by Ulrich vector bundles, that is vector bundles $\E$ such that $H^i(\E(-p))=0$ for all $i \ge 0$ and $1 \le p \le \dim X$.

The study of Ulrich vector bundles touches and is intimately related with several areas of commutative algebra and algebraic geometry, such as determinantal representation, Chow forms, Boij-S\"oderberg theory and so on (see for example \cite{es, b1, fl} and references therein), even though their basic existence question is still open (see for example \cite{hub, b1, f, c1, c2, cohu, lo} in the case of surfaces). 

The starting observation of this paper is that an Ulrich vector bundle is globally generated, whence it must positive in some sense. The question is: how much is it positive? We will investigate in this paper the positivity of its first Chern class.

Let $\E$ be a rank $r$ Ulrich vector bundle on $X$. It is a simple observation (see Lemma \ref{c1=0}) that
$c_1(\E) = 0$ if and only if $(X,\O_X(1), \E)=(\PP^n, \O_{\PP^n}(1), \O_{\PP^n}^{\oplus r})$. 

Aside from this case, we have that $c_1(\E)$ is globally generated and non-zero, so the question becomes: How positive is $c_1(\E)$?

The first answer, surprisingly related to the projective geometry of $X$, is given in Theorem \ref{main} below. It shows that whenever $X \subset \PP^N$ is not covered by lines then $c_1(\E)$ is quite positive and that $\E$ is big. More precisely we have

\begin{bthm}
\label{main}

\hskip 3cm

Let $X \subseteq \PP^N$ be a smooth irreducible variety of dimension $n \ge 1$ and degree $d$. Let $t \ge 1$ and let $\E$ be a rank $r$ Ulrich vector bundle for $(X, \O_X(t))$. Then $c_1(\E)$ has the following positivity property: 

For every $x \in X$ and for every subvariety $Z \subseteq X$ of dimension $k \ge 1$ passing through $x$, we have that 
\begin{equation}
\label{ineq}
c_1(\E)^k \cdot Z \ge r^k {\rm mult}_x(Z)
\end{equation}
holds if either $t \ge 2$ or $t=1$ and $x$ does not belong to a line contained in $X$. 

In particular $c_1(\E)$ is ample if either $t \ge 2$ or $t=1$ and there is no line contained in $X$.

Moreover assume that either $t \ge 2$ or $t=1$ and $X$ is not covered by lines. Then $\E$ is big,
\begin{equation}
\label{ineq'}
c_1(\E)^n \ge r^n
\end{equation}
and, if $r \ge 2$,
\begin{equation}
\label{ineq''}
c_1(\E)^n \ge r(d-1).
\end{equation}
\end{bthm}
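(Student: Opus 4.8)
The plan is to reduce the multi-dimensional positivity statement \eqref{ineq} to the known structure of Ulrich bundles on hyperplane (or curve) sections, and then derive the global consequences by a restriction-and-degeneration argument. First I would recall the two fundamental facts about an Ulrich bundle $\E$ for $(X,\O_X(t))$: it is globally generated, and its restriction to a general linear section behaves well — in particular, if $C = X \cap \Lambda$ is a general curve section (intersecting with a general linear subspace of complementary dimension, so $C$ is a smooth irreducible curve of degree $d t^{n-1}$), then $\E|_C$ is an Ulrich bundle on $C$ with $\O_C(1) = \O_X(t)|_C$, hence $\deg(\E|_C) = r \cdot \tfrac{1}{2} \deg(C)\,(2g(C) - 2 + \text{something})$ — more usefully, $c_1(\E) \cdot C = \deg(\E|_C)$ can be computed and compared with $r$ times a suitable intersection number. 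The key numerical input is that on a curve an Ulrich line bundle has a fixed large degree, so $c_1(\E)\cdot\ell$ on a line $\ell \subseteq X$ equals $rt$ if $t \ge 2$ but could be as small as... this is where the hypothesis on $t=1$ and lines enters: for $t=1$ an Ulrich bundle restricted to a line $\ell$ is $\O_\ell^{\oplus r}$, so $c_1(\E)\cdot\ell = 0$, which is exactly why lines must be excluded.

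Next, for the inequality \eqref{ineq} on a $k$-dimensional subvariety $Z$ through $x$, I would argue by induction on $k$. The base case $k=1$: take $Z$ a curve through $x$; cutting $X$ with general hyperplanes through $x$ does not immediately work since $Z$ need not be a linear section, so instead I would use that $\E|_Z$ (or rather $\E$ pulled back to the normalization of $Z$) is globally generated, and estimate $c_1(\E)\cdot Z$ by choosing sections vanishing at $x$: since $\E$ is globally generated, a general section of $\E|_{\wt Z}$ vanishes at a smooth point to order controlled by $\mathrm{mult}_x(Z)$, and one plays off the base-point-freeness against the multiplicity. For $t=1$, the point $x$ lying on no line guarantees that no component of the relevant linear section degenerates into a line, so the degree stays $\ge r$; for $t \ge 2$ this is automatic. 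For the inductive step I would cut $Z$ by a general member of $|c_1(\E)|$ through $x$ (using global generation of $c_1(\E)$, with a little care because $c_1(\E)$ is a class, not obviously a very ample one — one instead works with $\O_X(t)$-hyperplane sections and uses $c_1(\E) \cdot H^{k-1} \ge r^{k-1}\mathrm{mult}_x$ type bounds, or passes to a resolution). The multiplicity drops appropriately, and $c_1(\E)^k \cdot Z \ge r \cdot (c_1(\E)^{k-1} \cdot Z') \ge r \cdot r^{k-1}\mathrm{mult}_x(Z)$ by induction.

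The ampleness statement follows from \eqref{ineq} via the Nakai–Moishezon criterion together with the Seshadri-type criterion hidden in \eqref{ineq}: \eqref{ineq} with $\mathrm{mult}_x(Z) \ge 1$ gives $c_1(\E)^k \cdot Z > 0$ for all $Z$, which is Nakai–Moishezon, hence $c_1(\E)$ is ample (when the hypotheses on $t$ and lines hold on all of $X$). For bigness: an ample class is big, so $\E$ big follows once we know $c_1(\E)$ ample — but actually bigness of $\E$ means bigness of $\O_{\PP(\E)}(1)$, which follows since $\E$ is globally generated and $c_1(\E)$ is big on $X$ (a globally generated bundle with big determinant is big). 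Then \eqref{ineq'} is just \eqref{ineq} with $Z = X$, $k = n$, $\mathrm{mult}_x(X) = 1$. Finally \eqref{ineq''} for $r \ge 2$: here I would use the standard Ulrich identity $\chi(\E(-1)) = 0$ expanded via Hirzebruch–Riemann–Roch, together with $c_1(\E)$ being nef (indeed ample) and the Hodge-index-type / positivity inequalities relating $c_1(\E)^n$, $c_1(\E)^{n-1}\cdot H$, and $d = H^n$; the rank $\ge 2$ hypothesis gives room in the Chern-class inequality (for $r = 1$, $c_1(\E)^n$ can be exactly $d$ via a Veronese-type example, with no "$-1$" gain).

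\medskip
\noindent\textbf{Main obstacle.} The hard part will be the inductive step of \eqref{ineq}: cutting by a divisor in $|c_1(\E)|$ is not literally available since $c_1(\E)$ is only globally generated (not necessarily giving an embedding), so one must either work on $\PP(\E)$ and push forward, or carefully set up the induction using $\O_X(t)$-linear sections while tracking how $c_1(\E)$ restricts — and in the $t=1$ case, one must show throughout that general linear sections through $x$ remain free of lines, which is exactly the content of "$x$ does not lie on a line in $X$" but requires checking that this property is inherited by the sections. Handling multiplicities correctly under these restrictions (rather than assuming $Z$ smooth) is the technical crux.
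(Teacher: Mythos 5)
Your sketch does not contain the mechanism that actually makes \eqref{ineq} work, and the inductive scheme you propose does not close. Cutting $Z$ by a general member of $|\det\E|$ through $x$ would give $c_1(\E)^k\cdot Z=c_1(\E)^{k-1}\cdot(D\cdot Z)$ with ${\rm mult}_x(D\cdot Z)\ge {\rm mult}_x(Z)$, which loses the factor $r$ you need at each step (you would need a member of $|\det\E|$ with multiplicity $\ge r$ at $x$, which is not available), while cutting by $\O_X(t)$-hyperplanes computes $c_1(\E)^{k-1}\cdot H\cdot Z$, not $c_1(\E)^k\cdot Z$, so the induction never connects; moreover the hypothesis that $x$ lies on no line never enters your argument in any quantitative way. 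The paper's proof (Theorem \ref{0reg}) is not a section/induction argument at all: one blows up $\mu:\widetilde X\to X$ at $x$, uses Lemma \ref{scopp} to see that $\widetilde H=\mu^*(tH)-E$ is ample and globally generated exactly under the hypothesis ``$t\ge 2$, or $t=1$ and no line through $x$'' (this is where the line geometry enters, via the fact that a subvariety with a point of multiplicity equal to its degree contains lines through that point), checks that $(\mu^*\E)(-E)$ is $0$-regular with respect to $\widetilde H$, hence globally generated by Mumford regularity, so that $\mu^*c_1(\E)-rE$ is nef; intersecting with the strict transform $\widetilde Z$ and using $(E_{|\widetilde Z})^k=(-1)^{k+1}{\rm mult}_x(Z)$ gives \eqref{ineq} for all $k$ at once. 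This single blow-up idea is the missing ingredient in your proposal.

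The bigness step also rests on a false lemma: it is not true that a globally generated bundle with big determinant is big. For example $T_{\PP^2}(-1)$ is globally generated with $\det=\O_{\PP^2}(1)$ ample, yet $\PP(T_{\PP^2}(-1))$ maps to $\PP(H^0)\cong\PP^2$ with $2$-dimensional image (equivalently $c_1^2-c_2=0$), so it is not big; thus ampleness of $c_1(\E)$, even if established, does not give bigness of $\E$. The paper instead exploits the nefness of $(\mu^*\E)(-E)$ on the blow-up together with the Schur-polynomial positivity of \cite{dps}, which yields $s_n(\E^*)\ge\binom{r-1+n}{r-1}>0$, i.e.\ positive top self-intersection of the nef class $\O_{\PP(\E)}(1)$, hence bigness. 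Finally, \eqref{ineq''} is not a Hodge-index/Riemann--Roch exercise as you suggest: the paper deduces it from Sierra's degree bound $c_1(\E)^n\ge h^0(\E)-r$ for globally generated bundles of rank $r\ge 2$ \cite{si}, combined with $h^0(\E)=rd$ for Ulrich bundles and Lemma \ref{c1=0} to exclude the split case $\E\cong\O_X^{\oplus(r-1)}\oplus\det\E$; your sketch gives no route to this inequality.
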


Other positivity properties of Chern and dual Segre classes of $\E$ actually hold, see Remark \ref{altrapos}.

It is easy to see that the results of Theorem \ref{main} do not hold on varieties covered by lines. In particular \eqref{ineq'} is sharp, in the sense that for every $n \ge 2$ there are many $n$-dimensional varieties such that 
\eqref{ineq'} does not hold: for example products $\PP^1 \times B$, scrolls or quadrics and complete intersections in any of them (see Lemmas \ref{pic} and \ref{exa} and Remark \ref{sez}).

On the other hand, varieties covered by lines are classified in dimension $n \le 3$ and one could try to similarly classify their Ulrich vector bundle with $c_1(\E)$ not positive. We will concentrate on the case when $c_1(\E)^n = 0$.

We first recall the following

\begin{defi}
\label{lin2}
Let $X \subset \PP^N$ be a smooth irreducible variety of dimension $n \ge 1$. We say that $(X, \O_X(1))$ is a {\it linear $\PP^{n-1}$-bundle over a smooth curve $B$} if there are a rank $n$ vector bundle $\F$ and a line bundle $L$ on $B$ such that $X \cong \PP(\F)$ and $\O_X(1) \cong \O_{\PP(\F)}(1) \otimes \pi^*L$, where $\pi : \PP(\F) \to B$ is the projection. 
\end{defi}

With this in mind, we now classify all rank $r$ Ulrich vector bundles on surfaces with $c_1(\E)^2=0$. 

\begin{bthm}
\label{main2}

\hskip 3cm

Let $S  \subseteq \PP^N$ be a smooth irreducible surface. Let $\E$ be a rank $r$ Ulrich vector bundle on $S$. 

Then $c_1(\E)^2=0$ if and only if $(S,\O_S(1), \E)$ is one of the following:
\begin{itemize}
\item [(i)] $(\PP^2, \O_{\PP^2}(1), \O_{\PP^2}^{\oplus r})$;
\item [(ii)] $(\PP(\F), \O_{\PP(\F)}(1) \otimes \pi^*L, \pi^*(\G(L+ \det \F)))$, a linear $\PP^1$-bundle over a smooth curve $B$, with $\G$ a rank $r$ vector bundle on $B$ such that $H^q(\G(-L))=0$ for $q \ge 0$.
\end{itemize} 
\end{bthm}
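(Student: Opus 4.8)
The proof proceeds by proving both implications, the ``only if'' part being the substantial one.

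\smallskip
\noindent\emph{The ``if'' direction.} In case (i), $\O_{\PP^2}^{\oplus r}$ is Ulrich for $(\PP^2,\O_{\PP^2}(1))$ since $H^i(\O_{\PP^2}(-p))=0$ for all $i$ and $p=1,2$, and $c_1=0$. In case (ii), with $\O_S(1)\cong\O_{\PP(\F)}(1)\otimes\pi^*L$ and $\E=\pi^*(\G(L+\det\F))$, one has $\E(-p)=\O_{\PP(\F)}(-p)\otimes\pi^*(\G(L+\det\F)\otimes L^{-p})$; since $R^\bullet\pi_*\O_{\PP(\F)}(-1)=0$ we get $H^i(\E(-1))=0$ for all $i$, and using $R^0\pi_*\O_{\PP(\F)}(-2)=0$ together with relative duality $R^1\pi_*\O_{\PP(\F)}(-2)\cong(\det\F)^\vee$, the Leray spectral sequence gives $H^i(\E(-2))\cong H^{i-1}(B,\G(-L))$, which vanishes for all $i$ by hypothesis; hence $\E$ is Ulrich. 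Finally $c_1(\E)=\pi^*c_1(\G(L+\det\F))$ is pulled back from the curve $B$, so $c_1(\E)^2=0$. (This is essentially Lemma~\ref{exa}.)

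\smallskip
\noindent\emph{The ``only if'' direction.} Assume $c_1(\E)^2=0$. If $c_1(\E)=0$, then Lemma~\ref{c1=0} gives exactly case~(i). So suppose $c_1(\E)\ne0$. Since $\E$ is Ulrich for $(S,\O_S(1))$, the case $t=1$ of Theorem~\ref{main} shows that if $S$ were not covered by lines then $c_1(\E)^2\ge r^2>0$; hence $S$ is covered by lines. By the classification of smooth surfaces covered by lines, $(S,\O_S(1))$ is either $(\PP^2,\O_{\PP^2}(1))$ --- impossible here, as $\Pic\PP^2=\Z$ forces $c_1(\E)^2>0$ --- or a linear $\PP^1$-bundle $\pi\colon S=\PP(\F)\to B$ over a smooth curve, with $\O_S(1)\cong\O_{\PP(\F)}(1)\otimes\pi^*L$.

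\smallskip
\noindent\emph{The key step: $c_1(\E)\cdot f=0$ for a fiber $f$ of $\pi$.} As $\E$ is globally generated, so is $c_1(\E)=\det\E$; thus $c_1(\E)$ is nef, $c_1(\E)\cdot f\ge0$, and, being globally generated, nonzero and of self-intersection $0$, $c_1(\E)$ defines a morphism with one-dimensional image, whose Stein factorization is a fibration $g\colon S\to\widetilde B$ onto a smooth curve with $c_1(\E)=g^*A$, $A\ne0$. If $g=\pi$ we are done, since $c_1(\E)\cdot f=\pi^*A\cdot f=0$. Otherwise the general fiber $F$ of $g$ meets the $\pi$-fibers positively, so the $\pi$-fibers $\cong\PP^1$ dominate $\widetilde B$, giving $\widetilde B\cong\PP^1$; moreover $S$ then carries two distinct fibrations onto curves, which forces $S\cong C\times\PP^1$ for a smooth curve $C$ (so all fibers of $g$ are smooth). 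Since $c_1(\E)\cdot F=0$ and $\E|_F$ is globally generated with trivial determinant, $\E|_F\cong\O_F^{\oplus r}$ for every $F$, whence $\E\cong g^*\cM$ with $\cM:=g_*\E$ globally generated, locally free of rank $r$ on $\PP^1$, and $c_1(\E)=g^*c_1(\cM)$, so $\deg\cM\ne0$. Let $\delta:=\O_S(1)\cdot F\ge1$. If $\delta=1$, then $g$ is itself a linear $\PP^1$-bundle structure on $(S,\O_S(1))$, and renaming $\pi:=g$ we are done. If $\delta\ge2$ we reach a contradiction: since $\O_S(-1)$ has degree $\le-2$ on $F$, we get $g_*\E(-1)=0$ and $R^1g_*\E(-1)\cong\cM\otimes R^1g_*\O_S(-1)$ with $R^1g_*\O_S(-1)\ne0$; the Ulrich vanishing $H^i(\E(-1))=0$ then forces $\cM\otimes R^1g_*\O_S(-1)$ to have no cohomology on $\PP^1$, hence to be a sum of copies of $\O_{\PP^1}(-1)$, which (as $\cM$ is globally generated) forces $\cM\cong\O_{\PP^1}(b)^{\oplus r}$ with $b\ge0$ and $R^1g_*\O_S(-1)\cong\O_{\PP^1}(-1-b)^{\oplus N}$, $N>0$. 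By relative duality this gives $g_*(\omega_S\otimes\O_S(1))\cong\big(R^1g_*\O_S(-1)\big)^\vee\otimes\O_{\PP^1}(-2)\cong\O_{\PP^1}(b-1)^{\oplus N}$. But $\O_S(-1)=\O_{\PP(\F)}(-1)\otimes\pi^*L^{-1}$ and $R^\bullet\pi_*\O_{\PP(\F)}(-1)=0$, so $H^\bullet(S,\O_S(-1))=0$ and hence, by Serre duality, $H^0(S,\omega_S\otimes\O_S(1))=0$; as this group equals $H^0(\PP^1,\O_{\PP^1}(b-1)^{\oplus N})$, we must have $b=0$, i.e.\ $c_1(\E)=g^*c_1(\cM)=0$, a contradiction. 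This is the main difficulty of the proof: one must exclude that $c_1(\E)$ carves out a ``second ruling'' of $S$ incompatible with $\O_S(1)$, and doing so requires using together $c_1(\E)^2=0$, global generation, the degree-$1$ Ulrich vanishing, and the vanishing $H^0(S,\omega_S\otimes\O_S(1))=0$ coming from the linear $\PP^1$-bundle structure.

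\smallskip
\noindent\emph{Conclusion.} With $\pi\colon S=\PP(\F)\to B$ the linear $\PP^1$-bundle structure and $c_1(\E)\cdot f=0$ for every fiber $f$, the bundle $\E|_f$ is globally generated of degree $0$ on $f\cong\PP^1$, hence $\E|_f\cong\O_f^{\oplus r}$ for all $f$; by cohomology and base change $\pi_*\E$ is locally free of rank $r$ and $\pi^*\pi_*\E\to\E$ is an isomorphism. Setting $\G:=(\pi_*\E)\otimes L^{-1}\otimes(\det\F)^{-1}$ we get $\E\cong\pi^*(\G(L+\det\F))$, and, exactly as in the ``if'' direction, the Ulrich vanishings $H^i(\E(-2))=0$ translate (via Leray and $R^1\pi_*\O_{\PP(\F)}(-2)\cong(\det\F)^\vee$) into $H^q(B,\G(-L))=0$ for all $q\ge0$. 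Thus $(S,\O_S(1),\E)$ is as in (ii).
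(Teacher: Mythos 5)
Your overall strategy is genuinely different from the paper's (which first proves $c_2(\E)=0$ via \cite[Thm.~2.5]{dps}, then uses the Chern--class identities of \cite[Prop.~2.2]{c1} on $\PP(\F)$ to show that either $\det\E$ is pulled back from $B$ or $S\cong\PP^1\times\PP^1$ with $\det\E$ pulled back from the other ruling, and only then applies Lemma~\ref{pullback} and Lemma~\ref{exa}), but your version of the key step has a genuine gap. You assert that since $S$ carries two distinct fibrations onto curves ($\pi$ and the Stein factorization $g$ of $\varphi_{|\det\E|}$), it is forced to be a product $C\times\PP^1$, ``so all fibers of $g$ are smooth'', and you then use smoothness of \emph{all} fibers plus base change to conclude $\E\cong g^*\cM$. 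That implication is unproved and is false in general: take $B$ an elliptic curve, $M$ a torsion line bundle of order $m\ge 2$, and $S=\PP(\O_B\oplus M)$. The pencil spanned by $mC_0$ and $mC_0'$ (the two disjoint sections) is base-point free and gives a second fibration $g\colon S\to\PP^1$ whose general fiber is an elliptic curve and which has two multiple fibers; yet $S$ is not isomorphic to any product $C\times\PP^1$. So ``two fibrations $\Rightarrow$ product'' cannot be invoked, and with it your justification of $\E\cong g^*\cM$ (and of the smoothness of every fiber of $g$) collapses. This is exactly the hard point of the theorem: the paper rules out such ``second rulings'' not by surface geometry alone but by the Ulrich numerical constraints, namely $c_2(\E)=0$ together with \eqref{uno}, which force $g=0$, $e=0$, hence $S\cong\PP^1\times\PP^1$ in the case $c_1(\E)\cdot f>0$.

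The rest of your argument is essentially sound and could be used to repair the proof: from $\det\E=g^*A$ globally generated you can get $\E\cong g^*\H$ directly from Lemma~\ref{pullback} (no smoothness of fibers needed), the general fiber of $g$ is smooth by generic smoothness, and your computation with the projection formula, the Ulrich vanishing $H^\bullet(\E(-1))=0$, relative duality and $H^0(\omega_S\otimes\O_S(1))=H^2(\O_S(-1))^\vee=0$ then excludes $\delta\ge 2$ and forces the $\delta=1$ case, i.e.\ a second linear $\PP^1$-bundle structure, as desired. But as written, the deduction ``$S\cong C\times\PP^1$, hence $\E\cong g^*\cM$'' is a gap, and it occurs precisely where the real difficulty of the classification lies.
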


Note that the vector bundle $\G$ above is not necessarily Ulrich. See also Remark \ref{div} for other observations.

In the case of threefolds, let us recall the following

\begin{defi}
\label{lin3}
We say that $(X, \O_X(1))$ is a {\it linear quadric fibration over a smooth curve $B$}, if there is a morphism $\pi : X \to B$ such that each fiber $Q$ is isomorphic to a quadric and $\O_X(1)_{|Q} \cong \O_Q(1)$.
\end{defi}

Again we have a classification, though with some exceptions, of rank $r$ Ulrich vector bundles on threefolds first when $c_1(\E)^2=0$ and then when only $c_1(\E)^3=0$. 

\begin{bthm}
\label{main3}

\hskip 3cm

Let $X \subseteq \PP^N$ be a smooth irreducible threefold. Let $\E$ be a rank $r$ Ulrich vector bundle on $X$. 

Assume that $(X,\O_X(1))$ is neither a linear quadric fibration nor a linear $\PP^1$-bundle over a smooth surface with $\Delta(\F)=0$. 

Then $c_1(\E)^2=0$ if and only if $(X,\O_X(1), \E)$ is one of the following:
\begin{itemize}
\item [(i)] $(\PP^3, \O_{\PP^3}(1), \O_{\PP^3}^{\oplus r})$;
\item [(ii)] $(\PP(\F), \O_{\PP(\F)}(1) \otimes \pi^*L, \pi^*(\G(2L+ \det \F)))$, a linear $\PP^2$-bundle over a smooth curve $B$, with  $\G$ a rank $r$ vector bundle on $B$ such that $H^q(\G(-L))=0$ for $q \ge 0$; 
\item [(iii)] $(\PP(\F), \O_{\PP(\F)}(1) \otimes \pi^*L, \pi^*(\G(L+ \det \F)))$, a linear $\PP^1$-bundle over a smooth surface $B$, with $\G$ a rank $r$ vector bundle on $B$ such that $H^q(\G(-L)) = H^q(\G(-2L) \otimes \F^*)=0$ for $q \ge 0$ and  $c_1(\G(L+ \det \F))^2=0$.
\end{itemize}
\end{bthm}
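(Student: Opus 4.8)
The plan is to combine the ampleness/positivity input from Theorem \ref{main} with the classification of threefolds covered by lines and the structure theory of Ulrich bundles on such varieties. First I would observe that, by Lemma \ref{c1=0}, the case $c_1(\E)=0$ gives exactly (i), so assume $c_1(\E)\neq 0$; then $c_1(\E)$ is globally generated and non-zero. If $X$ were not covered by lines, Theorem \ref{main} would force $c_1(\E)$ to be big, contradicting $c_1(\E)^2=0$ (since big nef divisors have positive top self-intersection, and more precisely $c_1(\E)^3\geq r^3>0$). Hence $X$ is covered by lines, and since $c_1(\E)$ fails the positivity of \eqref{ineq} at every point through which a line passes — in fact $c_1(\E)^2\cdot Z=0$ for the relevant surfaces $Z$ — one deduces that every line $\ell\subset X$ satisfies $c_1(\E)\cdot\ell=0$, i.e. $\E$ is trivial on every line. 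The classification of smooth threefolds covered by lines (linear $\PP^2$-bundles over a curve, linear $\PP^1$-bundles over a surface, linear quadric fibrations over a curve, and a short list of Fano/Mukai type examples, together with $\PP^3$) then enters; the hypothesis of the theorem removes the quadric fibration and the $\Delta(\F)=0$ $\PP^1$-bundle cases, and the remaining sporadic cases must be excluded by a direct numerical check (degree, $c_1$ computations) showing $c_1(\E)^2=0$ is impossible there.

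Next, in each surviving case I would pull back and push forward. In case (ii), $X=\PP(\F)\to B$ with $B$ a curve: restricting $\E$ to a fiber $\PP^2$, triviality of $\E$ on lines plus $H^i(\E(-1))=0$ on $\PP^2$ forces $\E_{|\PP^2}\cong\O_{\PP^2}^{\oplus r}$, so by cohomology and base change $\E\cong\pi^*\G$ for a rank $r$ bundle $\G$ on $B$; matching $c_1$ and translating the Ulrich vanishing $H^i(\E(-p))=0$ for $p=1,2,3$ through the Leray spectral sequence for $\pi$, using $\O_X(1)\cong\O_{\PP(\F)}(1)\otimes\pi^*L$ and the standard computation of $\pi_*\O_{\PP(\F)}(m)$, pins down the twist $\G(2L+\det\F)$ and the condition $H^q(\G(-L))=0$. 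Case (iii) is analogous with $\pi:\PP(\F)\to B$ over a surface and fibers $\PP^1$: triviality on the fibers (which are lines) gives $\E\cong\pi^*\G(a)$ with the correct twist determined by $c_1$, the pushforward of the Ulrich conditions yields both $H^q(\G(-L))=0$ and $H^q(\G(-2L)\otimes\F^*)=0$ (the two coming from the two terms $R^0\pi_*$ and $R^1\pi_*$ of the relevant twisted sheaf on the $\PP^1$-bundle), and $c_1(\E)^2=0$ descends to $c_1(\G(L+\det\F))^2=0$ on $B$. For the converse, one checks that each listed triple is indeed Ulrich by reversing these Leray computations, and that $c_1(\E)^2=0$ holds — in (ii) automatically since $c_1(\E)$ is a pullback from a curve, and in (iii) by the imposed hypothesis.

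I expect the main obstacle to be the first step: pinning down exactly which threefolds covered by lines occur, and ruling out the sporadic cases not excluded by hypothesis. This requires invoking the fine classification of varieties swept out by lines (work of Beltrametti–Sommese, Rogora, Ein and others on varieties with lines through the general point, and the adjunction-theoretic description of $\PP^k$-bundles and quadric fibrations) and then, for the finitely many exceptional threefolds, computing $\Pic$, the degree, and possible $c_1(\E)$ of an Ulrich bundle precisely enough to see $c_1(\E)^2>0$ — using inequality \eqref{ineq''} of Theorem \ref{main} when $r\geq 2$ and a separate low-rank argument otherwise. The pushforward bookkeeping in cases (ii) and (iii) is routine once the geometric setup is fixed, but care is needed to get the twisting line bundles and the vanishing conditions on $\G$ stated in exactly the normalized form above; the identity $\O_{\PP(\F)}(1)_{|\ell}\cong\O_{\PP^1}(1)$ on a fiber line and the projection formula are the workhorses throughout.
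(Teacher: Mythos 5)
Your overall architecture matches the paper's: use Theorem \ref{main} to force $X$ to be covered by lines, invoke the Lanteri--Palleschi/Fujita classification, dispose of $\PP^1\times\PP^1\times\PP^1$ and $\PP(T_{\PP^2})$ by a direct analysis of their Ulrich bundles, and in the bundle cases show $\E$ descends and translate the Ulrich vanishings through Leray (this is exactly Lemmas \ref{exa}, \ref{cs}, \ref{pullback} and Propositions \ref{qf}, \ref{exa1} in the paper). However, there is a genuine gap at the pivotal step. You claim that from $c_1(\E)^2=0$ ``one deduces that every line $\ell\subset X$ satisfies $c_1(\E)\cdot\ell=0$, i.e.\ $\E$ is trivial on every line,'' and you then use triviality on the lines in the fibers to conclude $\E\cong\pi^*\G$. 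This deduction is not justified and is false as a numerical implication: a globally generated class with square zero need not vanish on lines (on $\PP^1\times\PP^2$ embedded by $\O(1,1)$, the pullback of a point of $\PP^1$ has square zero but meets the lines $\PP^1\times\{pt\}$ in degree $1$); nor does the failure of \eqref{ineq} tell you anything about individual lines --- Theorem \ref{main} only says that points violating \eqref{ineq} lie on lines. The actual content of the proof is precisely to show that the coefficient $a$ of $\xi$ in $\det\E=a\xi+\pi^*M$ vanishes. In the $\PP^2$-bundle-over-a-curve case this follows from $c_1(\E)^2=0$ and the numerical independence of $\xi^2$ and $\xi\cdot f$ (Lemma \ref{cs}); in the $\PP^1$-bundle-over-a-surface case one must use $\xi^2=\xi\,\pi^*c_1(\F)-\pi^*c_2(\F)$ and derive $a^2\Delta(\F)=0$, so that $a=0$ only because $\Delta(\F)\ne 0$. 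Your proposal never uses the hypothesis $\Delta(\F)\ne 0$ anywhere, which is a clear sign the essential argument is missing: without it the descent of $\E$ in case (iii) cannot be established (and indeed the theorem excludes $\Delta(\F)=0$ for exactly this reason).

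Two smaller points: the Picard rank $1$ threefolds covered by lines are not handled by an ad hoc ``numerical check'' in the paper but uniformly by Lemma \ref{pic}, which gives $c_1(\E)=\frac{r}{2}(K_X+(n+1)H)$ and hence $c_1(\E)^n>0$ unless $X=\PP^n$; and the exclusion of $\PP^1\times\PP^1\times\PP^1$ and $\PP(T_{\PP^2})$ is not a quick degree computation but requires classifying their Ulrich bundles with degenerate $c_1$ (Propositions \ref{qf} and \ref{exa1}), so that part of your plan, while pointing in the right direction, also needs substantially more work. The converse direction and the Leray bookkeeping you describe are fine and agree with Lemma \ref{exa}.
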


We do not know if the case (iii) actually occurs.

\begin{bthm}
\label{main4}

\hskip 3cm

Let $X \subseteq \PP^N$ be a smooth irreducible threefold. Let $\E$ be a rank $r$ Ulrich vector bundle on $X$. 

Assume that $(X, \O_X(1))$ is neither a linear quadric fibration, nor a linear $\PP^1$-bundle over a smooth surface, nor a linear $\PP^2$-bundle over a smooth curve with $\F$ normalized and $\deg \F \ge 0$, unless $(X, \O_X(1))$ is also a Del Pezzo threefold.

Then $c_1(\E)^3=0, c_1(\E)^2 \ne 0$ if and only if $(X, \O_X(1), \E)$ is one of the following:
\begin{itemize}
\item [(i)] $(\PP^1 \times \PP^1 \times \PP^1, \O_{\PP^1}(1) \boxtimes \O_{\PP^1}(1) \boxtimes \O_{\PP^1}(1),(\O_{\PP^1} \boxtimes \O_{\PP^1}(1) \boxtimes \O_{\PP^1}(2))^{\oplus s} \oplus (\O_{\PP^1} \boxtimes \O_{\PP^1}(2) \boxtimes \O_{\PP^1}(1))^{\oplus (r-s)})$ for $0 \le s \le r$;
\item [(ii)] $(\PP(T_{\PP^2}), \O_{\PP(T_{\PP^2})}(1),\pi^*(\O_{\PP^2}(2))^{\oplus r})$, where $\pi$ is one of the two $\PP^1$-bundle structures on $\PP(T_{\PP^2})$.
\end{itemize}
\end{bthm}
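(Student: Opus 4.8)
\emph{Overview of the plan.} I would cut the problem down, via the positivity of $c_1(\E)$ and the classification of varieties covered by lines, to the three Del Pezzo threefolds of Picard number $\ge 2$, and then describe the Ulrich bundles on each by pulling back along a $\PP^1$-bundle structure.

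\emph{Step 1: reduce to Del Pezzo threefolds.} Since $\E$ is Ulrich it is globally generated, hence $\det\E$ is globally generated and $D:=c_1(\E)$ is a base-point free nef divisor with $D^3=0$, $D^2\ne 0$. If some $x\in X$ did not lie on a line contained in $X$, then \eqref{ineq} of Theorem \ref{main} with $Z=X$, $k=3$ would give $D^3\ge r^3>0$; hence $X$ is covered by lines (and $D$ is not big). By the classification of smooth threefolds covered by lines, $(X,\O_X(1))$ is $\PP^3$, a smooth quadric, a linear $\PP^2$-bundle over a curve, a linear $\PP^1$-bundle over a surface, a linear quadric fibration over a curve, or a Del Pezzo threefold; the standing hypotheses leave only $\PP^3$, the quadric, linear $\PP^2$-bundles over a curve with $\F$ normalized and $\deg\F<0$, and Del Pezzo threefolds. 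On $\PP^3$ and on the quadric, $\Pic X=\Z\,\O_X(1)$, so $D=a\,\O_X(1)$: $a=0$ gives $D^2=0$ and $a\ge 1$ gives $D^3>0$, so no example arises. For a linear $\PP^2$-bundle $\pi\colon \PP(\F)\to B$ over a curve, write $D=s\xi+\pi^*\delta$ with $\xi=c_1(\O_{\PP(\F)}(1))$; the Grothendieck relation gives $D^3=s^2(se+3\deg\delta)$ with $e=\deg\F$, and since $\xi^2$ and the fibre-line class are independent in $H^4(X;\Q)$ one gets $D^2=0\iff s=0$; thus an example would force $s\ge 1$ and $\deg\delta=-se/3$, while nefness of $D$ against the section attached to a minimal quotient line bundle of $\F$ (of degree $\le e/2<0$, as a normalized $\F$ with $e<0$ is unstable) forces $\deg\delta\ge -se/2>-se/3$, a contradiction. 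So $X$ is a Del Pezzo threefold; and if $\rho(X)=1$ then again $\Pic X=\Z\,\O_X(1)$, $D=a\,\O_X(1)$ with $a\ge 1$ (note $D\ne 0$ by Lemma \ref{c1=0}) and $D^3>0$. Hence $\rho(X)\ge 2$, so $X$ is $\PP^1\times\PP^1\times\PP^1$, $\PP(T_{\PP^2})$, or $\mathrm{Bl}_p\PP^3$.

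\emph{Step 2: the three remaining varieties.} Each of these is a linear $\PP^1$-bundle $\pi\colon X\to Y$ over $Y=\PP^1\times\PP^1$, $\PP^2$, $\PP^2$ respectively, with $\O_X(1)=\O_{\PP(\F)}(1)\otimes\pi^*L$. In all three cases the nef cone together with $D^3=0$, $D^2\ne 0$ forces $D$ to be a positive multiple of the pull-back along $\pi$: for $(\PP^1)^3$, after permuting factors $D=aH_i+bH_j$ with $a,b\ge 1$ and $D=\pi^*(aH_i+bH_j)$; for $\PP(T_{\PP^2})$, in the basis given by the two rulings one computes $D^3=3ab(a+b)$, so $D$ is a multiple of one $\pi_k^*h$; for $\mathrm{Bl}_p\PP^3$, $D=a(L-E)=a\,\pi^*\O_{\PP^2}(1)$. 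Then $\E$ is trivial on every fibre of $\pi$, so $\E\cong\pi^*\G$ for a globally generated rank $r$ bundle $\G$ on $Y$ (with $h^0(\G)=h^0(\E)=r\deg X$). Pushing the Ulrich vanishings $H^i(\E(-p))=0$, $p=1,2,3$, down through $\pi$ — the only non-trivial contributions coming from $R^1\pi_*\O_{\PP(\F)}(-p)$ for $p=2,3$ — turns them into acyclicity conditions on twists of $\G$: on $\mathrm{Bl}_p\PP^3$ that $\G(-3),\G(-4),\G(-5)$ are acyclic on $\PP^2$, whence the Beilinson resolution gives $\G=0$, a contradiction; on $\PP(T_{\PP^2})$ that $\G(-3)$ and $\G\otimes\Omega_{\PP^2}(-3)$ are acyclic, which together with global generation and $h^0(\G)=6r$ forces $\G=\O_{\PP^2}(2)^{\oplus r}$, giving (ii); on $(\PP^1)^3$ that $\G(-2,-2)$ and $\G(-3,-3)$ are acyclic on $\PP^1\times\PP^1$, which together with global generation and $h^0(\G)=6r$ forces $\G=\O(1,0)^{\oplus s}\oplus\O(0,1)^{\oplus(r-s)}$, giving (i). The converse — that the bundles in (i), (ii) are Ulrich with $c_1^3=0\ne c_1^2$ — is a direct cohomology computation: Künneth on $(\PP^1)^3$, the Euler sequence on $\PP(T_{\PP^2})$.

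\emph{Main difficulty.} The push-forward/acyclicity reductions in Step 2 are routine; the genuinely delicate points are (a) making Step 1 airtight, in particular that linear $\PP^2$-bundles over curves with $\deg\F\ge 0$ really must be excluded — e.g. $\O_{\PP(\F)}(1)^{\oplus r}$ on $\PP^2\times\PP^1$ (with $\O(1,1)$) is already an Ulrich bundle with $c_1^3=0\ne c_1^2$ — while $\deg\F<0$ produces nothing; and (b) upgrading ``$\E$ is a pull-back'' to its precise split form, i.e. showing that the acyclicity conditions, together with global generation and the section count, pin $\G$ down with no room left for an indecomposable bundle of the same rank and Chern classes (here restricting $\G$ to lines in $Y$ should do the remaining work).
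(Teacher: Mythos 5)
Your overall route is the paper's: Theorem \ref{main} (plus Lemma \ref{pic} or your Picard-rank-one discussion) reduces to threefolds covered by lines with $\rho(X)\ge 2$, the Lanteri--Palleschi/Fujita list together with the standing hypotheses leaves the two degree-$6$ Del Pezzo threefolds, the degree-$7$ Del Pezzo $\mathrm{Bl}_p\PP^3$, and linear $\PP^2$-bundles over a curve with $\F$ normalized and $\deg\F<0$; in each surviving case one shows $\det\E$ is a pull-back from the nef cone and $c_1(\E)^3=0$, applies Lemma \ref{pullback}, and pushes the Ulrich vanishings down, exactly as in Propositions \ref{qf}, \ref{exa1} and in the paper's treatment of case (1) of \ref{lapa}. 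Two remarks. First, your explicit treatment of $\mathrm{Bl}_p\PP^3$ is a genuine plus: it is of type (5) in \ref{lapa} and a Del Pezzo threefold, so it is not removed by the hypotheses, while the paper's phrase ``thus belonging to cases (2) or (3)'' does not literally cover it; your pushforward computation ($\G(-3),\G(-4),\G(-5)$ acyclic on $\PP^2$, so $\G(-2)$ is Ulrich hence trivial by \cite{es}, contradicting the acyclicity of $\G(-5)$) closes that case. Second, the final identifications of $\G$ do not follow from global generation and a section count alone: you must recognize the pushed-down conditions as saying that a twist of $\G$ is Ulrich on $\PP^2$ (via the Euler sequence) or on the quadric surface, and then quote \cite{es} resp.\ \cite{bgs}; restriction to lines will not by itself rule out a nonsplit bundle with the same invariants. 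This is what Propositions \ref{qf} and \ref{exa1} do.

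The one genuine gap is your exclusion of the linear $\PP^2$-bundle $\pi\colon\PP(\F)\to B$ with $\F$ normalized and $e=\deg\F<0$. Writing $D=s\xi+\pi^*\delta$ with $s\ge 1$ and $\deg\delta=-se/3$, you need a section $C\subset\PP(\F)$ with $D\cdot C<0$, i.e.\ a quotient line bundle of $\F$ of degree $<e/3$, and you claim one of degree $\le e/2$ ``since $\F$ is unstable''. Instability only gives a saturated subsheaf $\O_B\subseteq\F$ of degree $\ge 0$, hence a rank-$2$ quotient $G$ of degree $\le e$; the minimal degree of a quotient line bundle of $G$ is controlled only by Nagata's bound $\le(\deg G+g(B))/2$, and for $g(B)\ge 1$ and $|e|$ small it can well be $\ge e/3$ (e.g.\ $\F=\O_B\oplus G$ with $G$ general stable of degree $-1$ on a curve of genus $\ge 2$ is normalized, yet every quotient line bundle has degree $\ge 0>e/3$). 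So nefness of $D$ against sections need not give a contradiction, and this step fails as stated. The paper's argument at this point is both simpler and airtight: normalization gives $h^0(\xi)=h^0(\F)\ne 0$, so $\xi$ is effective, and nefness of $D=c_1(\E)$ forces $0\le D^2\cdot\xi=s^2e+2s\deg\delta=s^2e/3<0$, a contradiction; hence $s=0$, $\det\E=\pi^*M$, $\E\cong\pi^*\H$ by Lemma \ref{pullback}, and then $c_1(\E)^2=0$, contrary to assumption. Substituting this computation for your section argument makes your proof complete.
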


In the case of a linear quadric fibration different from $\PP^1 \times \PP^1 \times \PP^1$ we do not know if there are Ulrich vector bundles $\E$ with $c_1(\E)^3=0$, but we suspect there aren't (see Proposition \ref{qf} and the discussion just before).

Throughout the whole paper we work over the complex numbers.

\section{Ulrich vector bundles with $c_1=0$}
\label{c1nullo}

It is easy, and probably well-known, to classify Ulrich vector bundles with $c_1=0$. We give a proof for a lack of reference and for self-containedness.
\begin{lemma}
\label{c1=0}
Let $X \subseteq \PP^N$ be a smooth irreducible variety of dimension $n \ge 1$. Let $\E$ be a rank $r$ Ulrich vector bundle on $X$. Then $c_1(\E) = 0$ if and only if $(X,\O_X(1), \E)=(\PP^n, \O_{\PP^n}(1), \O_{\PP^n}^{\oplus r})$.
\end{lemma}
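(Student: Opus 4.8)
The strategy is to use the definition of Ulrich bundle together with a hyperplane-section/induction argument to pin down the degree of $X$ and then the bundle itself. First I would record the numerical consequences of the Ulrich property: if $\E$ is Ulrich of rank $r$ for $(X,\O_X(1))$ with $\dim X = n$ and $\deg X = d$, then $h^0(\E) = rd$ and, more generally, the Hilbert polynomial of $\E$ is $rd\binom{s+n}{n}$ (this is the standard reformulation of Ulrich-ness, that $\pi_*\E \cong \O_{\PP^{n}}^{\oplus rd}$ under a finite linear projection $\pi\colon X \to \PP^n$). In particular $\chi(\E) = rd$ and $c_1(\E)\cdot\O_X(1)^{n-1}$ is determined by $d$, $r$ and the genus-type invariants of $X$ via Riemann–Roch.

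The core step is to show $c_1(\E) = 0$ forces $d = 1$, i.e. $X = \PP^n$. Here I would slice: take a general hyperplane section $Y = X \cap H$; then $\E_{|Y}$ is an Ulrich bundle for $(Y, \O_Y(1))$ of the same rank $r$ and with $c_1(\E_{|Y}) = c_1(\E)_{|Y} = 0$, and $\deg Y = d$. Iterating down to the curve case $C = X \cap H_1 \cap \dots \cap H_{n-1}$, an Ulrich bundle on $C$ of rank $r$ and degree-$d$ embedding has $\deg(\E_{|C}) = r(d + g - 1)$ where $g = g(C)$, while $c_1(\E_{|C}) = 0$ gives $\deg(\E_{|C}) = 0$; since a curve section of positive degree has $d + g - 1 \ge d \ge 1 > 0$ (as $g \ge 0$), we get a contradiction unless $d = 1$ and $g = 0$, forcing $C = \PP^1$ and then $X = \PP^n$ (a smooth variety of degree $1$ in $\PP^N$ is a linear subspace). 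Alternatively, and perhaps more cleanly, one argues directly that a globally generated bundle $\E$ with $c_1(\E) = 0$ on a projective variety must be trivial: $c_1 = 0$ plus global generation forces all Chern classes to vanish and the evaluation map on global sections to split off trivial summands, ultimately giving $\E \cong \O_X^{\oplus r}$; but then $H^0(\E(-1)) = H^0(\O_X(-1))^{\oplus r} = 0$ and the higher vanishing $H^i(\O_X(-p)) = 0$ for $1 \le i$, $1 \le p \le n$ — which is part of being Ulrich — is exactly the statement that $X$ has the cohomology of $\PP^n$, and combined with $h^0(\E) = rd$ one reads off $d = 1$.

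For the converse, I would simply check that $\O_{\PP^n}^{\oplus r}$ is Ulrich for $(\PP^n, \O_{\PP^n}(1))$: this reduces to $H^i(\O_{\PP^n}(-p)) = 0$ for all $i \ge 0$ and $1 \le p \le n$, which is immediate from the standard cohomology of line bundles on projective space (for $1 \le p \le n$ the twist $\O_{\PP^n}(-p)$ has no cohomology in any degree). And clearly $c_1(\O_{\PP^n}^{\oplus r}) = 0$.

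The step I expect to be the main obstacle is ruling out the possibility that $\E$ has vanishing first Chern class without being trivial — that is, controlling a globally generated bundle with $c_1 = 0$. On a general variety global generation plus $c_1 = 0$ does force triviality (the image of the Gauss-type classifying map to a Grassmannian lands in a point since it has trivial determinant, or: $\det\E = \O_X$ is globally generated of degree $0$ hence trivial, and a globally generated bundle with trivial determinant is trivial), but writing this cleanly and self-containedly is the delicate part; once $\E \cong \O_X^{\oplus r}$ is in hand, everything else is a short cohomological computation forcing $X = \PP^n$.
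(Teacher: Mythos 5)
Your main route is essentially the paper's own proof: restrict (via general hyperplane sections, under which Ulrich-ness is preserved) down to a smooth curve section $C$, use the formula $\deg(\E_{|C}) = r(\deg C + g(C) - 1)$ for Ulrich bundles on curves to deduce from $c_1(\E)=0$ that $\deg C = 1 - g(C)$, hence $\deg C = 1$ and $\deg X = 1$, so $X=\PP^n$ is a linear space, and then conclude $\E \cong \O_{\PP^n}^{\oplus r}$ by the classification of Ulrich bundles on projective space (Eisenbud--Schreyer/Beauville), exactly as in the paper. One small slip there: the chain $d+g-1 \ge d \ge 1 > 0$ is wrong as stated (it needs $g \ge 1$, not $g \ge 0$); the correct reading, which is the conclusion you in fact draw, is that $r(d+g-1)=0$ forces $d = 1-g$, hence $d=1$ and $g=0$. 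Your alternative route is genuinely different from the paper's and is also complete: Ulrich implies $0$-regularity, hence global generation; then $\det\E$ is a globally generated line bundle with trivial first Chern class, hence $\det\E \cong \O_X$, and a globally generated bundle with trivial determinant is trivial (the wedge of $r$ sections generating a fibre is a nowhere-vanishing constant section of $\det\E \cong \O_X$), so $\E \cong \O_X^{\oplus r}$; finally $r = h^0(\E) = r\deg X$ gives $\deg X = 1$ and $X = \PP^n$. This second argument avoids both the curve-section degree formula and the classification of Ulrich bundles on $\PP^n$, at the cost of the standard fact $h^0(\E) = r\deg X$ and of the triviality criterion, which is essentially the $\lambda_{\E}$ argument the paper deploys later in Lemma \ref{pullback}; either way the ``if'' direction is, as you note, immediate.
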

\begin{proof} 
If $(X,\O_X(1))=(\PP^n,\O_{\PP^n}(1))$ it is well-known \cite[Prop.~2.1]{es}, \cite[Thm.~2.3]{b1}, that $\E = \O_{\PP^n}^{\oplus r}$ and of course $c_1(\E) = 0$.

Viceversa suppose that $\E$ is a rank $r$ Ulrich vector bundle with $c_1(\E) = 0$ on $X$. It follows by \cite[(3.4)]{b1} that for any smooth $Y \in |\O_X(1)|$ we have that $\E_{|Y}$ is an Ulrich vector bundle  for $Y \subseteq \PP^{N-1}$ and of course $c_1(\E_{|Y}) = 0$. Therefore if $C = Y_1 \cap \ldots \cap Y_{n-1}$ is a smooth curve section of $X$, with $Y_i \in |\O_X(1)|$ for $1 \le i \le n-1$, then $\E_{|C}$ is an Ulrich vector bundle for $(C,\O_C(1))$ and $\deg (\E_{|C}) = 0$. By \cite[Prop.~2.3]{ch} we get that 
$$0 = \deg (\E_{|C}) = r (\deg C + g(C)-1).$$
Thus $\deg C = 1 - g(C)$, hence $\deg C = 1$ and then $\deg X = 1$. Therefore $(X,\O_X(1))=(\PP^n,\O_{\PP^n}(1))$. It follows by \cite[Prop.~2.1]{es} (or \cite[Thm.~2.3]{b1}) that $\E = \O_{\PP^n}^{\oplus r}$.
\end{proof} 

\section{Varieties of Picard rank $1$}
\label{rk1}

It has been observed by several authors that on varieties of Picard rank $1$ more information can be given on Ulrich vector bundles. We concentrate here on the first Chern class.

\begin{notation}
We will denote by $Q_n$ an $n$-dimensional smooth quadric in $\PP^{n+1}$.
\end{notation}

We have

\begin{lemma}
\label{pic}
Let $X \subseteq \PP^N$ be a smooth irreducible variety of dimension $n \ge 2$ such that $\Pic X \cong \ZZ$. Let $\E$ be a rank $r$ Ulrich vector bundle on $X$. Let  $H$ be the hyperplane divisor. Then 
$$c_1(\E) = \frac{r}{2}(K_X+(n+1)H).$$
In particular if $(X,\O_X(1))\ne (\PP^n,\O_{\PP^n}(1))$ we have
$$c_1(\E)^n = \begin{cases} \frac{r^n}{2^{n-1}}< r^n & {\rm if} \ (X,\O_X(1))=(Q_n,\O_{Q_n}(1)), n \ge 3 \\ 
\frac{r^n}{2^n}(K_X+(n+1)H)^n > r^n & {\rm if} \ (X,\O_X(1)) \ne (Q_n,\O_{Q_n}(1)) \end{cases}.$$
\end{lemma}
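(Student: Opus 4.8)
The plan is to reduce everything to the known structure theory of Ulrich bundles on a variety with $\Pic X \cong \ZZ$, and then run a short numerical computation. First I would establish the key formula $c_1(\E) = \tfrac{r}{2}(K_X + (n+1)H)$. Since $\Pic X \cong \ZZ$, write $c_1(\E) = mH$ for some integer $m$; the point is to pin down $m$. The standard trick is to restrict to a smooth curve section $C = Y_1 \cap \dots \cap Y_{n-1}$ with $Y_i \in |\O_X(1)|$, which by the hyperplane-restriction property of Ulrich bundles (\cite[(3.4)]{b1}, already used in the proof of Lemma \ref{c1=0}) is again Ulrich for $(C, \O_C(1))$. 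Then $\deg(\E_{|C}) = m \deg C = m d$, while the degree of an Ulrich bundle on a curve is computed in \cite[Prop.~2.3]{ch} as $r(\deg C + g(C) - 1) = r\big(d + g(C) - 1\big)$. By adjunction on $X$ (and iterated adjunction down to $C$), $2g(C) - 2 = (K_X + (n-1)H)\cdot H^{n-1} = K_X\cdot H^{n-1} + (n-1)d$, so $\deg C + g(C) - 1 = d + \tfrac12 K_X\cdot H^{n-1} + \tfrac{n-1}{2}d = \tfrac12\big(K_X\cdot H^{n-1} + (n+1)d\big)$. Equating $md = \tfrac r2\big(K_X\cdot H^{n-1} + (n+1)d\big) = \tfrac r2 (K_X + (n+1)H)\cdot H^{n-1}$, and noting $(K_X + (n+1)H) = c H$ for some integer $c$ with $cd = (K_X + (n+1)H)\cdot H^{n-1}$, we get $m = \tfrac{rc}{2}$, i.e. $c_1(\E) = \tfrac r2 (K_X + (n+1)H)$ as claimed.

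Next I would compute $c_1(\E)^n = \tfrac{r^n}{2^n}(K_X + (n+1)H)^n$ and analyze the sign of $(K_X + (n+1)H)^n$ using the classification of varieties with $\Pic X \cong \ZZ$ according to the nef value / adjunction theory. Write $K_X + (n+1)H = c H$ as above. The assumption $(X, \O_X(1)) \ne (\PP^n, \O_{\PP^n}(1))$ rules out $c \le 0$ in the "most negative" case: indeed $K_X = -(n+1)H$ characterizes $(\PP^n, \O(1))$ among Picard rank one varieties (this is classical — e.g. Kobayashi--Ochiai), so $c \ge 0$. If $c = 0$ we are back in the excluded $\PP^n$ case, so $c \ge 1$. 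The borderline $c = 1$, i.e. $K_X = -nH$, again by Kobayashi--Ochiai forces $(X, \O_X(1)) = (Q_n, \O_{Q_n}(1))$, for which $H^n = \deg Q_n = 2$, giving $c_1(\E)^n = \tfrac{r^n}{2^n}\cdot 1^n \cdot 2 = \tfrac{r^n}{2^{n-1}}$, which is $< r^n$ precisely when $n \ge 2$ (and for $n \ge 3$ it is strictly less; the statement only asserts this for $n \ge 3$, the surface case $Q_2 \cong \PP^1\times\PP^1$ having Picard rank $2$ and thus being excluded). In all remaining cases $c \ge 2$, hence $(K_X+(n+1)H)^n = c^n H^n = c^n d \ge 2^n$, so $c_1(\E)^n = \tfrac{r^n}{2^n}c^n d \ge \tfrac{r^n}{2^n}\cdot 2^n = r^n$, and in fact strictly $> r^n$ unless $c^n d = 2^n$; since $c \ge 2$ this forces $c = 2, d = 1$, impossible as $d = 1$ means $\PP^n$. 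Hence $c_1(\E)^n > r^n$ whenever $(X,\O_X(1))$ is neither $(\PP^n,\O(1))$ nor $(Q_n,\O_{Q_n}(1))$.

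I expect the main obstacle to be the clean invocation of the adjunction-theoretic classification: one needs that $\Pic X \cong \ZZ$, $X \ne \PP^n$, and $K_X + (n+1)H$ not ample implies $(X, \O_X(1)) = (Q_n, \O_{Q_n}(1))$, and that otherwise $K_X + nH$ is nef (so $c \ge 2$). This is exactly the content of the Kobayashi--Ochiai / Fujita--Ionescu results on the first reduction and should be cited rather than reproved; the only subtlety is checking the low-dimensional edge cases and making sure the Picard rank one hypothesis excludes $\PP^1\times\PP^1$ so that the quadric contributes only for $n \ge 3$. The rest is the elementary curve-section degree bookkeeping above, which is routine once the hyperplane-section Ulrich property and the curve degree formula are quoted.
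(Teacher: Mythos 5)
Your derivation of the formula $c_1(\E)=\frac{r}{2}(K_X+(n+1)H)$ is correct and is essentially the paper's computation run one dimension lower: the paper restricts to a surface section and quotes \cite[Prop.~2.2(3)]{c1}, you restrict to a curve section and quote \cite[Prop.~2.3]{ch} together with adjunction; both work. One caveat already here: $\Pic X\cong\ZZ$ does not mean that $H$ generates $\Pic X$, so you are not entitled to write $c_1(\E)=mH$ and $K_X+(n+1)H=cH$ with $m,c\in\ZZ$; the paper writes $H=hA$, $K_X=kA$, $c_1(\E)=aA$ with $A$ the ample generator. For the first formula this sloppiness is harmless, since your identity only compares intersection numbers with $H^{n-1}$ and pins down the coefficient of $A$.

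In the second half, however, the same slip becomes a genuine gap. Your trichotomy $c\le 0$, $c=1$, $c\ge 2$ presupposes that $c=(k+(n+1)h)/h$ is an integer, which fails whenever $h\ge 2$: for instance $(X,\O_X(1))=(\PP^2,\O_{\PP^2}(2))$, the Veronese surface in $\PP^5$, satisfies all the hypotheses of the lemma, is neither $(\PP^2,\O_{\PP^2}(1))$ nor a quadric, and has $K_X+3H=\tfrac{3}{2}H$, so it falls into none of your cases (here $1<c<2$); the same happens for $(\PP^n,\O_{\PP^n}(h))$ with $h\ge 2$. The conclusion of the lemma is still true in these cases, but your case analysis, as written, does not reach them, and your closing remark that ``$K_X+(n+1)H$ not ample implies the quadric'' is also off (with $\rho(X)=1$ and $X\ne\PP^n$ that divisor is always ample; the quadric is detected by $K_X+nH=0$). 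The fix is to argue on the integer $k+(n+1)h$ and apply Kobayashi--Ochiai to the generator $A$ (ample, though possibly not very ample): one checks $k+(n+1)h\ge 1$, that $k+(n+1)h=1$ forces $h=1$ and the quadric, and that otherwise $(k+(n+1)h)^nA^n>2^n$ (ruling out $k+(n+1)h=2$, $A^n=1$ again by Kobayashi--Ochiai), which yields the strict inequality. Note that this is also where you genuinely diverge from the paper: instead of an index trichotomy, the paper quotes the adjunction-theoretic nefness of $K_X+(n-1)H$ from \cite[Prop.~7.2.2, 7.2.3 and 7.2.4]{bs2} once $(\PP^n,\O_{\PP^n}(1))$ and $(Q_n,\O_{Q_n}(1))$ are excluded, and expands $(D+2H)^n\ge 2^nH^n>2^n$. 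Your route is viable, but only after the integrality issue is repaired.
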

\begin{proof} 
Let $H_1, \ldots, H_{n-2}$ be general in $|H|$ and let $S = X \cap H_1 \cap \ldots \cap H_{n-2} \subseteq \PP^{N-n+2}$. It follows by \cite[(3.4)]{b1} that  $\E_{|S}$ is a rank $r$ Ulrich vector bundle on $S$ and \cite[Prop.~2.2(3)]{c1} gives
\begin{equation}
\label{sup}
c_1(\E_{|S}) \cdot H_{|S} = \frac{r}{2}(3H_{|S}^2 + K_S \cdot H_{|S}).
\end{equation}
Let $A$ be an ample generator of $\Pic X$, so that we have $H = hA, K_X = kA$ and $c_1(\E) = aA$, for some $h, k , a \in \ZZ$. Replacing in \eqref{sup} we get
$$ah A_{|S}^2 =  \frac{rh}{2}(k + (n+1)h) A_{|S}^2$$
from which we deduce that $a =  \frac{r}{2}(k + (n+1)h)$ and therefore 
$$c_1(\E) = \frac{r}{2}(K_X+(n+1)H).$$ 
Moreover if $(X,\O_X(1)) \ne (\PP^n,\O_{\PP^n}(1)), (Q_n, \O_{Q_n}(1))$, it follows by \cite[Prop.~7.2.2, 7.2.3 and 7.2.4]{bs2}) that $D:= K_X+(n-1)H$ is nef, hence
$$c_1(\E)^n = \frac{r^n}{2^n}(D+2H)^n \ge r^nH^n > r^n.$$
If $(X,\O_X(1))=(Q_n,\O_{Q_n}(1))$, then $n \ge 3$, since $\Pic X \cong \ZZ$. In the latter case we have that
$K_X=-nH$ and therefore $c_1(\E)^n = \frac{r^n}{2^{n-1}}< r^n$.
\end{proof}

On the other hand, when the Picard rank is not $1$, one can achieve that $c_1(\E)^n =0$ even though $(X,\O_X(1))\ne (\PP^n,\O_{\PP^n}(1))$. Several examples are given in the next section.

\section{Ulrich vector bundles on scrolls}

A big source of examples are given by scrolls. These are treated, to mention a few, in \cite[Prop.~4.1(ii)]{b1} and in \cite{ho, flcpl, ahmpl}.

We study here the case when $c_1(\E)^n =0$.

\begin{lemma}
\label{exa}
Let $B$ be a smooth irreducible variety of dimension $b \ge 1$ and let $L$ be a divisor on $B$. Let $n$ be an integer such that $n \ge b+1$ and let $\F$ be a rank $n-b+1$ vector bundle on $B$. Let $X = \PP(\F)$ with tautological line bundle $\xi$ and projection $\pi: X \to B$. Let $H = \xi + \pi^*L$ and assume that $H$ is very ample. Let $\G$ be a rank $r$ vector bundle on $B$ and let 
$$\E = \pi^*(\G((n-b)L+ \det \F)).$$  
Then $\E$ is an Ulrich vector bundle for $(X, H)$ if and only if 
\begin{equation}
\label{ul}
H^q(\G(-(k+1)L)\otimes S^k \F^*)=0 \ \hbox{for} \ q \ge 0 \ \hbox{and} \ 0 \le k \le b-1.
\end{equation}
In particular $\E$ is an Ulrich vector bundle for $(X, H)$ if
\begin{itemize}
\item [(i)] $b=1$ and $H^q(\G(-L))=0$ for $q \ge 0$ (in particular if $L$ is very ample and $\G$ is an Ulrich vector bundle for $(B, L))$, or
\item [(ii)] $b \ge 2, \F = \O_B^{\oplus (n-b+1)}$ and $H^q(\G(-kL))=0$ for $q \ge 0$ and $1 \le k \le b$  (in particular if $L$ is very ample and $\G$ is an Ulrich vector bundle for $(B, L))$.
\end{itemize}
Moreover $c_1(\E)^s = 0$ for every integer $s \ge b+1$.
\end{lemma}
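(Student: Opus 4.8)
The plan is to compute the cohomology groups $H^i(X, \E(-pH))$ for $1 \le p \le n$ directly from the projective bundle structure of $\pi \colon X = \PP(\F) \to B$. Set $e := n-b$, the relative dimension, so that $\rk \F = e+1$ and the fibres of $\pi$ are $\PP^e$. Since $H = \xi + \pi^* L$, one has
\[
\E(-pH) \cong \O_X(-p\xi) \otimes \pi^*\big(\G((e-p)L + \det\F)\big),
\]
so by the projection formula everything reduces to understanding $R^\bullet\pi_* \O_X(-p\xi)$. Here I would use the three standard facts about $\PP(\F)$: (a) $\pi_*\O_X(m\xi) = S^m\F$ and $R^{>0}\pi_*\O_X(m\xi) = 0$ for $m \ge 0$; (b) $R^j\pi_*\O_X(m\xi) = 0$ for all $j$ when $-e \le m \le -1$; and (c) since $\omega_{X/B} \cong \O_X(-(e+1)\xi)\otimes\pi^*\det\F$, relative Serre duality gives $R^e\pi_*\O_X(m\xi) \cong S^{-m-e-1}\F^*\otimes\det\F^*$ for $m \le -e-1$.

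First I would treat the range $1 \le p \le e$: there $-e \le -p \le -1$, so by (b) $R^j\pi_*\E(-pH) = 0$ for every $j$, hence $H^i(X, \E(-pH)) = 0$ for all $i$ by the Leray spectral sequence. For $e+1 \le p \le n$ only $R^e\pi_*$ survives, and by (c) and the projection formula $R^e\pi_*\E(-pH) \cong S^{p-e-1}\F^*\otimes\G((e-p)L)$, the two copies of $\det\F$ cancelling. Setting $k := p-e-1$, which ranges over $0 \le k \le b-1$ as $p$ ranges over $e+1 \le p \le n$, the Leray spectral sequence degenerates to $H^i(X, \E(-pH)) \cong H^{i-e}(B, S^k\F^* \otimes \G(-(k+1)L))$. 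Since $\dim B = b$, requiring the left side to vanish for all $i \ge 0$ is the same as requiring $H^q(\G(-(k+1)L)\otimes S^k\F^*) = 0$ for all $q \ge 0$; combining the two ranges of $p$ then gives that $\E$ is Ulrich for $(X,H)$ if and only if \eqref{ul} holds.

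The two special cases follow by specialising \eqref{ul}. For $b = 1$ only $k = 0$ occurs and $S^0\F^* = \O_B$, so \eqref{ul} reads $H^q(\G(-L)) = 0$ for $q \ge 0$, which is exactly the Ulrich condition for $\G$ on the curve $(B, L)$ when $L$ is very ample. For $b \ge 2$ with $\F = \O_B^{\oplus(n-b+1)}$ each $S^k\F^*$ is a direct sum of copies of $\O_B$, so \eqref{ul} becomes $H^q(\G(-(k+1)L)) = 0$ for $q \ge 0$ and $0 \le k \le b-1$, i.e. $H^q(\G(-kL)) = 0$ for $q \ge 0$ and $1 \le k \le b$, again precisely the Ulrich condition for $\G$ on the $b$-dimensional $(B, L)$ when $L$ is very ample. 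Finally, since $\E = \pi^*\big(\G((n-b)L + \det\F)\big)$ is pulled back from $B$, we have $c_1(\E)^s = \pi^*\big(c_1(\G((n-b)L+\det\F))^s\big)$, and the power $c_1(\G((n-b)L+\det\F))^s$ vanishes for $s \ge b+1 = \dim B + 1$; hence $c_1(\E)^s = 0$ for $s \ge b+1$.

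I do not expect any genuinely hard step here: the argument is a routine application of the projection formula, relative duality and Leray. The one thing to be careful about is the bookkeeping — fixing the convention for $\PP(\F)$, getting the twist in $\omega_{X/B}$ exactly right, and checking that the split between the "automatically vanishing" range and the "$R^e$" range occurs precisely at $p = n-b$, equivalently that the index $k = p - (n-b) - 1$ lands in the interval $[0, b-1]$ matching the range in \eqref{ul}.
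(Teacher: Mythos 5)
Your proposal is correct and follows essentially the same route as the paper: the higher direct images of $\O_X(-p\xi)$ (the content of \cite[Ex.~III.8.4]{ha}, which you recover via relative duality), the projection formula, and the Leray spectral sequence, with the same split of the range of $p$ at $p=n-b$ and the same substitution $k=p-(n-b)-1$. The treatment of the special cases (i), (ii) and of the vanishing $c_1(\E)^s=0$ for $s\ge b+1$ also matches the paper's argument.
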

\begin{proof} 
Let $p$ be an integer such that $1 \le p \le n$. Let $j \ge 0$. By \cite[Ex.~III.8.4]{ha} we see that
\begin{equation}
\label{rj}
R^j \pi_*\O_{\PP(\F)}(-p) =  \begin{cases} 0 & {\rm if} \ j \ne n-b \ \hbox{or} \ j=n-b \ \hbox{and} \ 1 \le p \le n-b \\ 
S^{p-n+b-1} \F^*(-\det \F) & {\rm if} \  j=n-b \ \hbox{and} \ n-b+1 \le p \le n \end{cases}.
\end{equation}
Now
\begin{equation}
\label{rj'}
R^j \pi_*\E(-pH) \cong R^j \pi_*\E(-p\xi)(-pL) \cong \G((n-b-p)L + \det \F) \otimes R^j \pi_*\O_{\PP(\F)}(-p)
\end{equation}
whence \eqref{rj} and the Leray spectral sequence give that 
\begin{equation}
\label{ul1}
H^i(\E(-pH))=0 \ \hbox{for} \ i \le n-b-1 \ \hbox{and for} \ i=n-b \ \hbox{and} \ p \le n-b.
\end{equation}
On the other hand, for $n-b \le i \le n$ and $n-b+1 \le p \le n$, again using \eqref{rj}, \eqref{rj'} and the Leray spectral sequence, we have
$$H^i(\E(-pH)) \cong H^{i-n+b}(\G((n-b-p)L)\otimes S^{p-n+b-1} \F^*)$$
that is the equivalence with \eqref{ul} holds.

Now in cases (i) or (ii) one easyly sees that \eqref{ul} is equivalent to assert that $H^q(\G(-kL))=0$ for $q \ge 0$ and $1 \le k \le b$. Finally since $c_1(\E) = \pi^*(c_1(\G) + r(n-b)L + r\det \F)$ it is clear that $c_1(\E)^s = 0$ for every integer $s \ge b+1$.
\end{proof}

\begin{remarks} (about Lemma \ref{exa}) \hskip 4cm
\label{div}

\begin{itemize}
\item [(1)] The condition that $H = \xi + \pi^*L$ is very ample can be easily achieved, for example if $\F$ is very ample and $L$ is globally generated.
\item [(2)] In order to have that $\E$ is an Ulrich vector bundle, the conditions in (i) or (ii) do not require $L$ to be any positive. For example choose $\F$ very ample, $L=0$, $b=1$ and $\G$ is a general line bundle on $B$ of degree $g(B)-1$ or $b \ge 2$, $B=\PP^b$ and $\G=\O_{\PP^b}(-1)$. 
\item [(3)] In case (ii) we have that $X = \PP^{n-b} \times B$ and $H = \pi_1^*\O_{ \PP^{n-b}}(1) + \pi_2^*L$. This shows that in products $\PP^{n-b} \times B$ there are always Ulrich vector bundles as long as there are on $B$.
\end{itemize}
\end{remarks}

\begin{remark}
\label{sez}
Let $Y$ be any smooth variety of dimension $n+k$ carrying an Ulrich vector bundle $\E$ for $(Y,H)$ such that  $c_1(\E)^s = 0$ for some integer $s$. Let $X$ be a smooth complete intersection of $H_1,\ldots, H_k \in |H|$. Then $\E_{|X}$ is an Ulrich vector bundle for $(X,H_{|X})$ by \cite[(3.4)]{b1} and again $c_1(\E_{|X})^s = 0$.
\end{remark}

Using this one can produce several examples of varieties with Ulrich vector bundles $\E$ with $c_1(\E)^s = 0$. For example one can take complete intersections in products. One special case will be studied in section \ref{dp}.

\section{A basic lemma}
\label{basic}

We record a useful result that will be applied several times.

\begin{lemma}
\label{pullback}
Let $\pi : X \to Y$ be a morphism between two smooth projective varieties such that $\pi_*\O_X = \O_Y$. Let $\E$ be a rank $r$ globally generated vector bundle on $X$. If there is a line bundle $M$ on $Y$ such that $\det \E = \pi^*M$, then there is a rank $r$ vector bundle $\H$ on $Y$ such that $\E = \pi^*\H$.
\end{lemma}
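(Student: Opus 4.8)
The plan is to exploit the global generation of $\E$ to realize it as a pullback from a Grassmannian, and then to show that the classifying morphism to that Grassmannian already factors through $\pi$. The hypothesis $\det\E=\pi^*M$ is what makes this work, because the Pl\"ucker polarization of the Grassmannian pulls back exactly to $\det\E$.

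Concretely, set $W=H^0(X,\E)$, which is finite-dimensional since $X$ is projective. Global generation gives a surjection $W\otimes\O_X\twoheadrightarrow\E$, hence a morphism $f\colon X\to\operatorname{Gr}$ to the Grassmannian $\operatorname{Gr}$ of rank-$r$ quotients of $W$, with $\E\cong f^*\mathcal Q$ for $\mathcal Q$ the universal rank-$r$ quotient bundle. Let $\iota\colon\operatorname{Gr}\hookrightarrow\mathbb P$ be the Pl\"ucker embedding, i.e.\ the closed immersion attached to the very ample line bundle $\det\mathcal Q=\O_{\operatorname{Gr}}(1)$. Then $g:=\iota\circ f\colon X\to\mathbb P$ is the morphism defined by the line bundle $f^*\O_{\operatorname{Gr}}(1)=\det f^*\mathcal Q=\det\E=\pi^*M$ together with a base-point-free linear subsystem $V\subseteq|\pi^*M|$, namely the pullback of the hyperplane system on $\mathbb P$.

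The heart of the argument is to factor $g$ through $\pi$. Note first that $\pi$ is surjective (and proper, as $X$, $Y$ are projective), since otherwise $\pi_*\O_X$ would be supported on a proper closed subset of $Y$. Now, since $\pi_*\O_X=\O_Y$, the projection formula gives $\pi_*\pi^*M\cong M$, so $\pi^*\colon H^0(Y,M)\to H^0(X,\pi^*M)$ is an isomorphism; hence $V=\pi^*V'$ for a linear system $V'\subseteq|M|$ on $Y$, which is base-point-free because $\pi$ is surjective. The morphism $g'\colon Y\to\mathbb P$ defined by $(M,V')$ then satisfies $g=g'\circ\pi$. It remains to see that $g'$ factors through $\operatorname{Gr}\subseteq\mathbb P$; set-theoretically this is clear since $g'(Y)=g(X)\subseteq\iota(\operatorname{Gr})$, and scheme-theoretically one argues as follows. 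Let $\mathcal J\subseteq\O_Y$ be the image of $(g')^*\mathcal I_{\operatorname{Gr}}\to\O_Y$. Because $g'\circ\pi=g=\iota\circ f$ does factor through $\operatorname{Gr}$, the composite $\pi^*\mathcal J\to\O_X$ is zero, so $\pi$ factors through the closed subscheme $Z=V(\mathcal J)\hookrightarrow Y$; then in the factorization $\O_Y\twoheadrightarrow\O_Z\to\pi_*\O_X$ of the isomorphism $\O_Y\xrightarrow{\sim}\pi_*\O_X$, the first map is forced to be injective, hence an isomorphism, i.e.\ $\mathcal J=0$. Thus $g'=\iota\circ\bar f$ for some $\bar f\colon Y\to\operatorname{Gr}$, and from $\iota\circ f=g=\iota\circ\bar f\circ\pi$ together with $\iota$ being a monomorphism we get $f=\bar f\circ\pi$. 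Consequently $\E\cong f^*\mathcal Q\cong\pi^*(\bar f^*\mathcal Q)$, and $\H:=\bar f^*\mathcal Q$ is the desired rank-$r$ bundle on $Y$.

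I expect the main obstacle to be precisely this last "rigidity" step: upgrading the evident fact that $f$ is constant along the fibres of $\pi$ to a genuine factorization of schemes, which is the only place where $\pi_*\O_X=\O_Y$ is used in an essential (non-cohomological) way, and where one must be careful about nilpotents — hence the phrasing via the ideal sheaf $\mathcal J$. An alternative, more hands-on route avoids Grassmannians: one checks directly that $\E$ is trivial on each fibre $F=\pi^{-1}(y)$ (since $\E|_F$ is globally generated with $\det(\E|_F)=(\pi^*M)|_F=\O_F$, and a globally generated bundle with trivial determinant on a connected projective variety is trivial), and then invokes cohomology and base change to see that $\pi_*\E$ is locally free of rank $r$ with $\pi^*\pi_*\E\xrightarrow{\sim}\E$; I would nonetheless favour the Grassmannian argument, as it needs no semicontinuity input.
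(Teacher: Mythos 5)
Your proof is correct and follows essentially the same route as the paper: both use the classifying map to the Grassmannian together with the Pl\"ucker embedding, use $\pi_*\O_X=\O_Y$ to descend the morphism attached to $\det\E=\pi^*M$ through $\pi$, and recover $\E$ as the pullback of the universal rank-$r$ bundle. The only difference is bookkeeping: the paper factors through the images $W=\varphi_{\det\E}(X)$ and $Z=\Phi_\E(X)$ and restricts the tautological bundle, whereas you descend the Pl\"ucker morphism directly to $Y$ and verify the scheme-theoretic factorization through the Grassmannian via the ideal-sheaf argument, which is a slightly more careful treatment of the same idea.
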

\begin{proof}
Consider the map
$$\lambda_{\E} : \Lambda^r H^0(\E) \to H^0(\det \E).$$
The linear system $|\Im \lambda_{\E}|$ is base-point free: In fact, given any point $x \in X$, there are $s_1,\ldots s_r \in H^0(\E)$ such that $s_1(x),\ldots s_r(x)$ are linearly independent in $\E_x$. Hence the section $\lambda_{\E}(s_1 \wedge \ldots \wedge s_r) \in \Im \lambda_{\E}$ does not vanish at $x$. Now $\E$ is globally generated, whence it defines a morphism 
$$\Phi_{\E}: X \to G(H^0(\E), r)$$ 
such that 
$$\E = \Phi_{\E}^*\U,$$ 
where $\U$ is the tautological bundle on $G(H^0(\E),r)$. Moreover we have a commutative diagram (see for example \cite[\S 3]{mu})
$$\xymatrix{X \ar[d]^{\varphi_{|\Im \lambda_{\E}|}} \ar[r]^{\hskip -.9cm \Phi_{\E}} & G(H^0(\E), r) \ar@{^{(}->}[d]^P \\ \PP \Im \lambda_{\E}^* \ar@{^{(}->}[r] & \PP \Lambda^r H^0(\E)^*}$$
where $P$ is the Pl\"ucker embedding. Then $Z: = \Phi_{\E}(X) = \varphi_{|\Im \lambda_{\E}|}(X)$ and we deduce that there is a morphism $\varphi_{|\Im \lambda_{\E}|} : X \to Z$ and a rank $r$ vector bundle $\G:= \U_{|Z}$ on $Z$ such that $\E \cong \varphi_{|\Im \lambda_{\E}|}^*\G$. 

If $W = \varphi_{\det \E}(X)$, using the fact that $\Im \lambda_{\E} \subseteq H^0(\det \E)$, we also have a commutative diagram
$$\xymatrix{X \ar[dr]_{\varphi_{|\Im \lambda_{\E}|}} \ar[r]^{\varphi_{\det \E}} & W \ar[d]^{f} \\ & Z}$$
and we find that $\E \cong \varphi_{\det \E}^*(f^*\G)$. 

Since $\det \E = \pi^*M$ is globally generated and $\pi_*\O_X = \O_Y$, we have that $M$ is globally generated and there is a commutative diagram
$$\xymatrix{X \ar[dr]_{\varphi_{\det \E}} \ar[r]^{\pi} & Y \ar[d]^{\varphi_M} \\ & W}.$$
Therefore $\E \cong \pi^*\H$ where $\H$ is the rank $r$ vector bundle $\varphi_M^*(f^*\G))$.
\end{proof}

The above lemma allows to classify Ulrich vector bundles $\E$ on a linear $\PP^{n-1}$-bundle over a curve when $c_1(\E)^s = 0$ for $2 \le s \le n-1$.

\begin{lemma}
\label{cs}
Let $B$ be a smooth irreducible curve and let $L$ be a divisor on $B$. Let $n$ be an integer such that $n \ge 3$ and let $\F$ be a rank $n$ vector bundle on $B$. Let $X = \PP(\F)$ with tautological line bundle $\xi$ and projection $\pi: X \to B$. Let $H = \xi + \pi^*L$ and assume that $H$ is very ample. Let $\E$ be an Ulrich vector bundle for $(X, H)$ with $c_1(\E)^s = 0$ for some $s$ with $2 \le s \le n-1$. Then there exists a rank $r$ vector bundle $\G$ on $B$ such that $H^q(\G(-L))=0$ for $q \ge 0$ and
$$\E = \pi^*(\G((n-1)L+ \det \F)).$$  
\end{lemma}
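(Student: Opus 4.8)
The plan is to reduce to the situation of Lemma~\ref{exa}(i) by first producing a line bundle $M$ on $B$ with $\det \E = \pi^*M$, then invoking Lemma~\ref{pullback}, and finally checking that the resulting bundle on $B$ has the claimed twist and cohomological vanishing. First I would write $\Pic X = \pi^*\Pic B \oplus \ZZ \xi$, so that $c_1(\E) = a\xi + \pi^*D$ for some $a \in \ZZ$ and $D \in \Pic B$. The fibers of $\pi$ are copies of $\PP^{n-1}$ with $n-1 \ge 2$; restricting $\E$ to a fiber $F \cong \PP^{n-1} \subseteq \PP^{N-\dim B}$ shows $\E_{|F}$ is Ulrich on $(\PP^{n-1},\O(1))$ by \cite[(3.4)]{b1}, hence $\E_{|F} \cong \O_F^{\oplus r}$ by Lemma~\ref{c1=0}, so $c_1(\E)\cdot F^{\dim B-1}$ restricted to a fiber is $0$, forcing $a = 0$. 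Thus $\det \E = \pi^*M$ with $M = \O_B(D)$.

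Since $\E$ is Ulrich it is globally generated, so Lemma~\ref{pullback} (with $Y=B$, using $\pi_*\O_X=\O_B$) gives a rank $r$ vector bundle $\H$ on $B$ with $\E \cong \pi^*\H$. It remains to pin down $\H$. Writing $\H = \G((n-1)L + \det\F)$ defines $\G$; the content is then exactly the cohomological condition. By the projection formula and \eqref{rj}, \eqref{rj'} we compute $R^j\pi_*(\E(-pH))$ in terms of $\G$, $L$, $\F$ for $1 \le p \le n$, and the Ulrich vanishing $H^i(\E(-pH))=0$ for all $i$ and all such $p$ translates, via the Leray spectral sequence, into $H^q\big(\G(-(k+1)L)\otimes S^k\F^*\big)=0$ for $q \ge 0$ and $0 \le k \le \dim B - 1 = 0$, i.e.\ precisely $H^q(\G(-L))=0$ for $q \ge 0$. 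This is the same computation carried out in the proof of Lemma~\ref{exa}, now read in reverse: Lemma~\ref{exa}(i) already shows that $\pi^*(\G((n-1)L+\det\F))$ is Ulrich \emph{if and only if} $H^q(\G(-L))=0$, so once we know $\E = \pi^*\H$ of this form, the vanishing is automatic.

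I do not expect a genuine obstacle here; the one point requiring a little care is the normalization step identifying $\H$ with a twist of the shape $\G((n-1)L+\det\F)$ rather than some other twist — but this is a mere change of notation (defining $\G := \H(-(n-1)L-\det\F)$), and the hypothesis $c_1(\E)^s=0$ for some $2 \le s \le n-1$ is in fact not needed in the argument as structured, since $a=0$ already follows from the fiberwise triviality of $\E$; it is presumably retained only to match the statement's phrasing. The genuinely substantive input is Lemma~\ref{pullback}, together with the cohomology bookkeeping of Lemma~\ref{exa}.
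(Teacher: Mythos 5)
Your overall skeleton (get $\det\E=\pi^*M$, apply Lemma \ref{pullback}, then read off the vanishing from Lemma \ref{exa}(i) after setting $\G=\H((1-n)L-\det\F)$) is the same as the paper's, but the step where you deduce $a=0$ is wrong, and it is precisely the step where the hypothesis $c_1(\E)^s=0$ must be used. The result \cite[(3.4)]{b1} concerns restriction to a smooth member of $|H|$, not to a fiber of $\pi$: a fiber $F\cong\PP^{n-1}$ is not a hyperplane section of $X$ (indeed $H_{|F}\cong\O_{\PP^{n-1}}(1)$, so hyperplane sections meet $F$ in hyperplanes of $F$), and the restriction of an Ulrich bundle to a fiber need not be Ulrich. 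Concretely, take $B=\PP^1$, $\F=\O_{\PP^1}^{\oplus 3}$, $L=\O_{\PP^1}(1)$, so that $X=\PP^2\times\PP^1$, $\xi=\O_{\PP^2}(1)\boxtimes\O_{\PP^1}$ and $H=\O_{\PP^2}(1)\boxtimes\O_{\PP^1}(1)$. Then $\E=\xi$ is Ulrich for $(X,H)$ (by K\"unneth, $H^\bullet(\O_{\PP^2}(1-p)\boxtimes\O_{\PP^1}(-p))=0$ for $p=1,2,3$), yet $\E_{|F}\cong\O_{\PP^2}(1)$ is not Ulrich on the fiber and $\E$ is not a pullback from $B$. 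This example also refutes your closing remark that the hypothesis $c_1(\E)^s=0$ for some $2\le s\le n-1$ is ``not needed'': here $c_1(\E)^2=\xi^2\ne 0$ and the conclusion of the lemma fails, so the numerical hypothesis is essential, not decorative.

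The correct way to get $a=0$, and what the paper does, is purely numerical and uses the hypothesis directly: write $c_1(\E)\equiv a\xi+bf$ with $f$ a fiber class, so that $f^2=0$ and $0=c_1(\E)^s=a^s\xi^s+sa^{s-1}b\,\xi^{s-1}\cdot f$; since $2\le s\le n-1$, the classes $\xi^s$ and $\xi^{s-1}\cdot f$ sit below the top degree and are numerically independent (pair them against $\xi^{n-s}$ and $\xi^{n-s-1}f$), forcing $a=0$ and hence $\det\E=\pi^*M$. From that point on, your use of Lemma \ref{pullback} and the cohomological bookkeeping via Lemma \ref{exa}(i) is exactly the paper's argument and is fine; it is only the justification of $a=0$ that needs to be replaced.
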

\begin{proof} 
We can write $\det \E = a \xi + \pi^*M$ for some $a \in \ZZ$ and some line bundle $M$ on $B$, so that there is $b \in \ZZ$ such that $c_1(\E) = a \xi + b f$, where $f$ is a fiber. Then 
$$0 = c_1(\E)^s = a^s \xi^s + sa^{s-1} b \xi^{s-1} \cdot f.$$
Since $\xi^s$ and $\xi^{s-1} \cdot f$ are numerically independent, we deduce that $a=0$ and then $\det \E = \pi^*M$. Now Lemma \ref{pullback} implies that there is a rank $r$ vector bundle $\H$ on $B$ such that $\E \cong \pi^*\H$. Setting $\G = \H((1-n)L-\det \F)$, we find that $\E \cong \pi^*(\G((n-1)L+ \det \F))$ and Lemma \ref{exa}(i) implies that $H^q(\G(-L))=0$ for $q \ge 0$. 
\end{proof}

\section{Ulrich vector bundles on Del Pezzo threefolds covered by lines}
\label{dp}

The goal of this section is to study Ulrich vector bundles $\E$ with $c_1(\E)^3=0$ on $C \times \PP^1 \times \PP^1$, where $C$ is a curve and on $\PP(T_{\PP^b})$. In particular when $C = \PP^1$ and $b=2$ we get the two examples \cite[Thm.~1.4]{lp}, \cite[Introduction]{fu} of Del Pezzo threefolds covered by lines. 

We start with the simplest linear quadric fibration. The lemma will show that we cannot hope to construct Ulrich vector bundles $\E$ with $c_1(\E)^3=0$ by pulling back from the base. For this reason we suspect that, in the case of linear quadric fibrations different from $\PP^1 \times \PP^1 \times \PP^1$, there aren't any such bundles.
 
\begin{prop}
\label{qf}
Let $C$ be a smooth irreducible curve. Let $X = C \times \PP^1 \times \PP^1$ and let $H = \L \boxtimes \O_{\PP^1}(1) \boxtimes \O_{\PP^1}(1)$, where $\L$ is a very ample line bundle on $C$. Then the only rank $r$ Ulrich vector bundles $\E$ for $(X,H)$ such that $c_1(\E)^3=0$ are:
\begin{itemize}
\item [(i)] $\pi^*(\G(L))$, where $\pi: X \to C \times \PP^1$ is one of the two projections, $L = \L \boxtimes \O_{\PP^1}(1)$ and $\G$ is a rank $r$ Ulrich vector bundle for $(C \times \PP^1, L)$; 
\item [(ii)] $(\O_{\PP^1} \boxtimes \O_{\PP^1}(1) \boxtimes \O_{\PP^1}(2))^{\oplus s} \oplus (\O_{\PP^1} \boxtimes \O_{\PP^1}(2) \boxtimes \O_{\PP^1}(1))^{\oplus (r-s)}$ for $0 \le s \le r$, when $C = \PP^1$ and $\L = \O_{\PP^1}(1)$.
\end{itemize}
In particular there are no rank $r$ Ulrich vector bundles $\E$ for $(X,H)$ such that $c_1(\E)^2=0$.
\end{prop}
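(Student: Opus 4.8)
The plan is to analyze an Ulrich bundle $\E$ for $(X,H)$ with $X=C\times\PP^1\times\PP^1$ by looking at its determinant. Write $c_1(\E)=\alpha f_C + \beta f_1 + \gamma f_2$ in the obvious basis of divisor classes (where $f_C$ is the pullback of a point from $C$ via a very ample generator, and $f_1,f_2$ are the pullbacks of $\O_{\PP^1}(1)$). Since $\E$ is globally generated, $c_1(\E)$ is nef, so $\alpha,\beta,\gamma\ge 0$. Expanding $c_1(\E)^3=0$ and using the intersection numbers on $X$ (only $f_C\cdot f_1\cdot f_2$ survives, all squares of the $f_i$ vanish) one gets $\alpha\beta\gamma \cdot(\text{stuff})=0$ forcing at least one of the coordinates to vanish; in fact the vanishing condition will force either two of them to vanish or one to vanish together with a constraint. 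First I would run this numerical computation carefully, keeping in mind that $f_C^2$ need not be zero if $\deg C>1$, so the argument should be phrased using the restriction-to-curve-section trick (as in Lemma \ref{c1=0} and Lemma \ref{pic}) or directly via the Chow ring of $C\times\PP^1\times\PP^1$, which is $\CH^*(C)\otimes\CH^*(\PP^1)\otimes\CH^*(\PP^1)$. The upshot is a short list of possible numerical shapes for $c_1(\E)$.

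Next, in each surviving case I would use Lemma \ref{pullback}. If $c_1(\E)$ is the pullback of a line bundle under one of the projections $\pi:X\to C\times\PP^1$ (this happens precisely when the coefficient of, say, $f_2$ along one $\PP^1$-direction is zero), then $\det\E=\pi^*M$ and Lemma \ref{pullback} (applicable since $\pi_*\O_X=\O_{C\times\PP^1}$) gives $\E\cong\pi^*\H$ for some rank $r$ bundle $\H$ on $C\times\PP^1$. Then by the cohomology-and-base-change / Künneth computation as in Lemma \ref{exa}(i) — here $X=\PP(\O_{C\times\PP^1}^{\oplus 2})$ is a linear $\PP^1$-bundle over the surface $C\times\PP^1$ — one checks that $\E=\pi^*\H$ is Ulrich for $(X,H)$ iff $\G:=\H(-L)$ satisfies $H^q(\G)=0$ for all $q$ and, going back through the twist, $\G(L)$ itself is Ulrich for $(C\times\PP^1,L)$; this yields case (i). The remaining numerical case is when $c_1(\E)$ has zero coefficient along the $C$-direction, i.e. $c_1(\E)=\beta f_1+\gamma f_2$ with $\beta,\gamma\ge1$ (the degenerate subcases $\beta=0$ or $\gamma=0$ collapse into case (i)). Here I would restrict to a fiber $\PP^1\times\PP^1$ over a point of $C$: the restriction $\E|_{\PP^1\times\PP^1}$ is globally generated with $c_1=\beta f_1+\gamma f_2$, and combining the Ulrich normalization (which pins down $c_1(\E)\cdot H^2 = \tfrac r2(K_X+4H)\cdot H^2$, forcing $\beta+\gamma=3$ after one computes $\deg X$ and $K_X\cdot H^2$) with $c_1(\E)^3=0$ and $c_1(\E)^2\ne0$ forces $C=\PP^1$, $\L=\O_{\PP^1}(1)$, and $\{\beta,\gamma\}=\{1,2\}$. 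Then a direct argument (Ulrich bundles on $\PP^1\times\PP^1\times\PP^1$ with prescribed determinant split, cf. the references in the paper, e.g. via restricting to rulings and using that an Ulrich bundle on $\PP^1$ in the right twist is trivial) produces the decomposition in (ii).

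The last sentence — that there are no Ulrich $\E$ with $c_1(\E)^2=0$ — follows immediately: if $c_1(\E)^2=0$ then a fortiori $c_1(\E)^3=0$, so $\E$ is one of (i) or (ii); but in case (i), $c_1(\pi^*(\G(L)))^2 = \pi^*(c_1(\G(L)))^2$ with $\G(L)$ Ulrich on the surface $C\times\PP^1$, which by Theorem \ref{main} applied to that surface (which is covered by lines only in the $\PP^1$-direction, not the $C$-direction) has $c_1(\G(L))^2>0$ — more directly, one computes $c_1(\G(L))\cdot L>0$ and $c_1(\G(L))\cdot f_1>0$ so the self-intersection is strictly positive; and in case (ii) one reads off $c_1(\E)^2\ne0$ directly from the splitting.

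The main obstacle I expect is the final step of case (ii): going from the numerical data ($C=\PP^1$, fixed determinant on $\PP^1\times\PP^1\times\PP^1$) to the actual splitting of $\E$. This requires showing an Ulrich bundle on $\PP^1\times\PP^1\times\PP^1$ with $c_1=f_1+2f_2$ (or $2f_1+f_2$) is a direct sum of the indicated line bundles — presumably by restricting to the family of lines in one ruling, invoking that the restriction is Ulrich on $\PP^1$ hence trivial after the appropriate twist, and then using semicontinuity/rigidity to globalize; this is where care is needed and where an ad hoc computation (rather than a slick argument) may be unavoidable. The numerical bookkeeping in the first paragraph is routine but must be done with the non-rational curve $C$ in mind.
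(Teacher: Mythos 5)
There is a genuine gap in the case you call ``zero coefficient along the $C$-direction,'' and it is the heart of the proposition. Your claim that the Ulrich relation $c_1(\E)\cdot H^2=\frac{r}{2}(K_X+4H)\cdot H^2$, together with $c_1(\E)^3=0$ and $c_1(\E)^2\neq 0$, forces $(C,\L)=(\PP^1,\O_{\PP^1}(1))$ is not true at the numerical level: writing $c_1(\E)\equiv \beta H_2+\gamma H_3$ (with $H_2,H_3$ the pullbacks of $\O_{\PP^1}(1)$ from the two $\PP^1$-factors), $\ell=\deg\L$ and $g=g(C)$, that relation only says $\beta+\gamma=r\bigl(4+\frac{g-1}{\ell}\bigr)$, which is perfectly consistent with, say, $g=1$ and any $\ell$; moreover $c_1(\E)^3=0$ is automatic for such a class, so nothing numerical excludes other $(C,\L)$. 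Likewise restricting $\E$ to one fiber $\PP^1\times\PP^1$ only constrains that restriction and cannot pin down $\E$. What is needed (and what the paper does) is: since $\det\E$ is globally generated, degree $0$ on $C$ forces the $C$-factor of $\det\E$ to be trivial, so $\det\E$ is an honest pullback under the third projection $\pi_{23}:X\to\PP^1\times\PP^1$, and Lemma \ref{pullback} gives $\E\cong\pi_{23}^*\H$. Then the K\"unneth formula applied to $0=H^i(\E(-pH))=H^1(C,-p\L)\otimes H^{i-1}(\PP^1\times\PP^1,\H(-p))$ does the rest: if $(C,\L)\neq(\PP^1,\O_{\PP^1}(1))$ one gets $\chi(\H(-p))=0$ for $p=1,2,3$, and Riemann--Roch on the quadric turns these into the contradiction $2r=0$; if $(C,\L)=(\PP^1,\O_{\PP^1}(1))$ the vanishings for $p=2,3$ say exactly that $\H(-1)$ is Ulrich on $\PP^1\times\PP^1$, and the known classification of Ulrich bundles on the quadric surface \cite{bgs} gives the splitting of case (ii). This simultaneously supplies the cohomological exclusion you replaced by numerics and closes the splitting step you flagged as the main obstacle; no semicontinuity or rigidity argument over rulings is needed.

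Smaller points: your worry that $f_C^2\neq 0$ when $\deg C>1$ is unfounded, since $f_C$ is the class of a fiber $\{pt\}\times\PP^1\times\PP^1$ of the projection to $C$, so $f_C^2=0$ whatever the embedding of $C$; also $\{\beta,\gamma\}=\{1,2\}$ is only the $r=1$ case (in general $\beta+\gamma=3r$, e.g.\ $(2r-s,r+s)$ in case (ii)). Your treatment of case (i) and of the final assertion is essentially the paper's, except that Theorem \ref{main} cannot be invoked for $C\times\PP^1$ (it is covered by lines); the clean argument is that $c_1(\G)+rL$ is nef plus ample, hence ample, so $c_1(\E)^2=\pi^*\bigl((c_1(\G)+rL)^2\bigr)\neq 0$.
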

\begin{proof}
Let $\pi_1 : X \to C, \pi_i : X \to \PP^1, i = 2, 3$ and $\pi_{ij}=(\pi_i,\pi_j)$ for $1 \le i < j \le 3$ be the projections. For $i=2, 3$ set $H_i = \pi_i^*(\O_{\PP^1}(1))$  and, given any line bundle $N$ on $C$, set $N_1 = \pi_1^*N$. By \cite[Exer.~III.12.6]{ha} we know that there is line bundle $M$ on $C$ and there are $a, b \in \ZZ$ such that $\det \E = M_1+ aH_2 + bH_3$.

Then $(\det \E)^3=0$ gives $6ab \deg M = 0$, whence either $\deg M=0$ or $a=0$ or $b=0$. Since $\E$ is globally generated we get that $M$ is globally generated and $a \ge 0, b \ge 0$. Therefore, if $\deg M = 0$ we have that $M = \O_C$. We deduce that there are three possibilities for $\det \E$: 
$$\pi_{12}^*(M \boxtimes \O_{\PP^1}(a)), \pi_{13}^*(M \boxtimes \O_{\PP^1}(b)) \ \hbox{and} \ \pi_{23}^*(\O_{\PP^1}(a) \boxtimes \O_{\PP^1}(b)).$$ 
By Lemma \ref{pullback} we deduce that there is a rank $r$ vector bundle $\H$ on $C \times \PP^1$ (respectively on $\PP^1\times \PP^1$) such that $\E \cong \pi_{1i}^*\H$ for $i=2, 3$ (respectively $\E \cong \pi_{23}^*\H$). 

Suppose that $\E \cong \pi_{1i}^*\H$ for $i=2, 3$. We have that $X \cong \PP(\O_{C \times \PP^1}^{\oplus 2})$ and $\xi \cong \pi_3^*(\O_{\PP^1}(1))$, so that $H = \xi + \pi_{1i}^*(\L \boxtimes \O_{\PP^1}(1))$. We can then apply Lemma \ref{exa}(ii) and deduce that $\E \cong \pi_{1i}^*(\G(L))$, where $L = \L \boxtimes \O_{\PP^1}(1)$ and $\G:= \H(-L)$ is a rank $r$ Ulrich vector bundle for $(C \times \PP^1, L)$.

Suppose instead that $\E \cong \pi_{23}^*\H$. Set, for convenience of notation, $Q = \PP^1 \times \PP^1$ and $\O_Q(1) = \O_{\PP^1}(1) \boxtimes \O_{\PP^1}(1)$. We have $H = \pi_1^*\L + \pi_{23}^*(\O_Q(1))$ and, for every $1 \le i, p \le 3$, the K\"unneth formula gives
\begin{equation}
\label{ku}
0 = H^i(X, \E(-pH)) = \bigoplus\limits_{\alpha+\beta=i} H^\alpha(C, -p\L) \otimes H^\beta(Q, \H(-p))= H^1(C,-p\L) \otimes H^{i-1}(Q,\H(-p)).
\end{equation}
If $(C, \L) \ne (\PP^1, \O_{\PP^1}(1))$ we have $H^1(C,-p\L) \ne 0$, and therefore we get that 
$$H^{i-1}(Q,\H(-p))=0 \ \hbox{for every} \ 1 \le i, p \le 3.$$ 
But then
$$0 = \chi(\H(-p)) = r + \frac{1}{2}c_1(\H)^2 - c_2(\H) + (1-p) c_1(\H) \cdot \O_Q(1) + rp^2-2rp.$$
Now 
$$0 =  \chi(\H(-2)) - \chi(\H(-1)) = - c_1(\H) \cdot \O_Q(1) + r$$
and
$$0 =  \chi(\H(-3)) - \chi(\H(-2)) = - c_1(\H) \cdot \O_Q(1) + 3r$$
thus giving the contradiction $2r = 0$.

Finally if $(C, \L) = (\PP^1, \O_{\PP^1}(1))$ then $H^1(C,-\L) = 0$ and $H^1(C,-p\L) \ne 0$ for $p=2, 3$, whence \eqref{ku} gives that $H^{i-1}(Q,\H(-p))=0$ for $1 \le i \le 3$ and $p=2, 3$. But then $\G := \H(-1)$ is a rank $r$ Ulrich vector bundle on $Q$. By \cite[Rmk.~2.5(4)]{bgs} (see also \cite[Rmk.~2.6]{b1}, \cite[Exa.~3.2]{ahmpl}) the only indecomposable Ulrich bundles for $(Q, \O_Q(1))$ are $\O_{\PP^1} \boxtimes \O_{\PP^1}(1)$ and $\O_{\PP^1}(1) \boxtimes \O_{\PP^1}$. This gives that 
$$\E \cong (\O_{\PP^1} \boxtimes \O_{\PP^1}(1) \boxtimes \O_{\PP^1}(2))^{\oplus s} \oplus (\O_{\PP^1} \boxtimes \O_{\PP^1}(2) \boxtimes \O_{\PP^1}(1))^{\oplus (r-s)}$$ 
for $0 \le s \le r$.

Now in case (i) we have that $c_1(\E) = \pi^*(c_1(\G(L)))$ whence $c_1(\E)^2 = \pi^*((c_1(\G)+rL)^2)$. But
$\G$, being Ulrich, is globally generated, whence $c_1(\G)+rL$ is ample. Therefore $c_1(\E)^2 \ne 0$.

In case (ii) we have that $c_1(\E) = (2r-s)H_2+(r+s)H_3$ and therefore 
$$c_1(\E)^2 = 2(2r-s)(r+s)H_2H_3 \ne 0.$$ 
\end{proof}

An interesting example, related to Remark \ref{sez}, is given by the projectivised tangent bundle of projective spaces. In the case of $\PP^2$ this is a Del Pezzo threefold covered by lines. We recall (see \cite[Thm.~A]{sa}) that $\PP(T_{\PP^b})$ has two $\PP^{b-1}$-bundle structures, $\pi: \PP(T_{\PP^b}) \to \PP^b$ and $\pi' : \PP(T_{\PP^b}) \to \PP^b$, given by the fact that $\PP(T_{\PP^b})$ is a divisor of type $(1,1)$ on $\PP^b \times \PP^b$.

\begin{prop}
\label{exa1}
Let $b \ge 2$ and let $X = \PP(T_{\PP^b})$ with tautological line bundle $\xi$ and projection $\pi: X \to \PP^b$. Then $\xi$ is very ample and $\pi^*(\O_{\PP^b}(b))$ is an Ulrich line bundle for $(X, \xi)$.
In particular $c_1(\pi^*(\O_{\PP^b}(b)))^s = 0$ for $b+1 \le s \le 2b-1$.

Moreover the only rank $r$ Ulrich vector bundles $\E$ for $(X, \xi)$ with $c_1(\E)^{2b-1}=0$ are $\pi^*(\O_{\PP^b}(b))^{\oplus r}$ and $(\pi')^*(\O_{\PP^b}(b))^{\oplus r}$. In particular there are no Ulrich vector bundles $\E$ for $(X, \xi)$ with $c_1(\E)^s=0$ for $1 \le s \le b$.
\end{prop}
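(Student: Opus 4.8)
The plan is to use both descriptions of $X = \PP(T_{\PP^b})$: as the $\PP^{b-1}$-bundle $\pi\colon X \to \PP^b$ (and the twin bundle $\pi'\colon X \to (\PP^b)^\vee$), and as the incidence divisor of type $(1,1)$ in $\PP^b \times (\PP^b)^\vee$. First I would note that the Euler sequence $0 \to \O_{\PP^b} \to \O_{\PP^b}(1)^{\oplus(b+1)} \to T_{\PP^b} \to 0$ realizes $X$ inside $\PP(\O_{\PP^b}(1)^{\oplus(b+1)}) = \PP^b \times (\PP^b)^\vee$ as the incidence hypersurface, with $\xi = \O_{\PP(T_{\PP^b})}(1)$ equal to $\O(1,1)|_X$; since $\O(1,1)$ is very ample on $\PP^b \times (\PP^b)^\vee$, so is $\xi$. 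The second projection $\pi'$ is, by \cite[Thm.~A]{sa} and the symmetry of the incidence variety, again a $\PP^{b-1}$-bundle, exhibiting $X$ as $\PP(T_{(\PP^b)^\vee})$ with the same tautological bundle $\xi$; so the whole situation is symmetric under $\pi \leftrightarrow \pi'$.

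For the Ulrich property of $\pi^*(\O_{\PP^b}(b))$ I would apply Lemma \ref{exa} with $B = \PP^b$, $L = 0$, $\F = T_{\PP^b}$ (which has rank $b = n - b + 1$ for $n = 2b-1 = \dim X$, so $H = \xi$), and $\G = \O_{\PP^b}(-1)^{\oplus r}$: then $\G((n-b)L + \det\F) = \O_{\PP^b}(-1)^{\oplus r}\otimes\det T_{\PP^b} = \O_{\PP^b}(b)^{\oplus r}$ (as $\det T_{\PP^b} = \O_{\PP^b}(b+1)$), and \eqref{ul} reduces to $H^q(\PP^b, S^k\Omega^1_{\PP^b}(-1)) = 0$ for $q \ge 0$ and $0 \le k \le b-1$. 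This follows from Bott's formula; alternatively it is equivalent, by Serre duality on $\PP^b$, to $H^q(\PP^b, S^k T_{\PP^b}(-b)) = 0$, which one gets from the symmetric powers $0 \to S^{k-1}W\otimes\O_{\PP^b}(k-1) \to S^k W\otimes\O_{\PP^b}(k) \to S^k T_{\PP^b} \to 0$ of the Euler sequence ($W = H^0(\O_{\PP^b}(1))$), reducing to the vanishing of $H^q(\O_{\PP^b}(j))$ for $-b \le j \le -1$. The assertion $c_1(\pi^*\O_{\PP^b}(b))^s = b^s\pi^*(h^s) = 0$ for $b+1 \le s \le 2b-1$ is then immediate ($h^{b+1} = 0$ on $\PP^b$), and is also the last statement of Lemma \ref{exa}.

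For the classification, let $\E$ be a rank $r$ Ulrich bundle for $(X,\xi)$ with $c_1(\E)^{2b-1} = 0$. Since $X$ is a $\PP^{b-1}$-bundle over $\PP^b$, $\Pic X = \ZZ H_1 \oplus \ZZ H_2$ with $H_1 = \pi^*\O_{\PP^b}(1) = \O(1,0)|_X$ and $H_2 = (\pi')^*\O_{\PP^b}(1) = \O(0,1)|_X$; one computes $H_1^bH_2^{b-1} = H_1^{b-1}H_2^b = 1$ while all other monomials of degree $2b-1$ vanish, so writing $c_1(\E) = \alpha H_1 + \beta H_2$ gives $c_1(\E)^{2b-1} = \binom{2b-1}{b}\alpha^{b-1}\beta^{b-1}(\alpha+\beta)$. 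Being Ulrich, $\E$ is globally generated, hence $c_1(\E)$ is nef and $\alpha,\beta \ge 0$ (test against lines in the fibers of $\pi$ and $\pi'$); and $c_1(\E) \ne 0$ by Lemma \ref{c1=0}, since $X$ is not a projective space. Hence $c_1(\E)^{2b-1} = 0$ forces $\alpha = 0$ or $\beta = 0$. Assume $\beta = 0$ (the case $\alpha = 0$ is identical with $\pi'$ and $(\PP^b)^\vee$ in place of $\pi$ and $\PP^b$); then $\det\E = \pi^*\O_{\PP^b}(\alpha)$, so by Lemma \ref{pullback} (whose hypotheses hold since $\E$ is globally generated and $\pi_*\O_X = \O_{\PP^b}$), $\E \cong \pi^*\H$ for a rank $r$ bundle $\H$ on $\PP^b$. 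Writing $\H = \G(b+1)$ and applying Lemma \ref{exa} again (same data, general $\G$), $\E$ is Ulrich if and only if $H^q(\PP^b, \H(-b-1)\otimes S^k\Omega^1_{\PP^b}) = 0$ for all $q$ and $0 \le k \le b-1$, which by Serre duality reads $H^q(\PP^b, \H^\vee\otimes S^k T_{\PP^b}) = 0$ for all $q$ and $0 \le k \le b-1$.

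The key step is to deduce $\H \cong \O_{\PP^b}(b)^{\oplus r}$ from this. The case $k = 0$ gives $H^q(\H^\vee) = 0$ for all $q$; tensoring the symmetric Euler sequence above with $\H^\vee$ and inducting on $k$, the vanishing of $H^q(\H^\vee\otimes S^k T_{\PP^b})$ forces $H^q(\H^\vee(k-1))\otimes S^{k-1}W \isomarrow H^q(\H^\vee(k))\otimes S^k W$, so $H^q(\H^\vee(k)) = 0$ for all $q$ and $0 \le k \le b-1$. Therefore $\H^\vee(b)$ satisfies $H^q(\H^\vee(b)(-p)) = 0$ for $1 \le p \le b$, i.e. it is an Ulrich bundle for $(\PP^b, \O_{\PP^b}(1))$, hence $\H^\vee(b) \cong \O_{\PP^b}^{\oplus r}$ (cf. the proof of Lemma \ref{c1=0}) and $\H \cong \O_{\PP^b}(b)^{\oplus r}$, so $\E \cong \pi^*(\O_{\PP^b}(b))^{\oplus r}$. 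This propagation of vanishing — recognizing $\H^\vee(b)$ as Ulrich on $\PP^b$ — is the only place I expect real work. Finally, if $\E$ is Ulrich with $c_1(\E)^s = 0$ for some $1 \le s \le b$, then $c_1(\E)^{2b-1} = c_1(\E)^s\cdot c_1(\E)^{2b-1-s} = 0$, so $\E$ is one of the two bundles just found; but then $c_1(\E)^s = (br)^s\pi^*(h^s) \ne 0$ since $h^s \ne 0$ on $\PP^b$ and $\pi^*$ is injective — a contradiction.
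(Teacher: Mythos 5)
Your proposal is correct, and its global architecture coincides with the paper's: very ampleness of $\xi$ via the incidence embedding in $\PP^b\times\PP^b$, the Ulrich property of $\pi^*\O_{\PP^b}(b)$ via Lemma \ref{exa} and Euler-sequence vanishing, reduction of the classification to $\det\E$ being a pullback, Lemma \ref{pullback}, and an Euler-sequence induction showing the resulting bundle on $\PP^b$ is a twist of an Ulrich (hence trivial) bundle. The genuine difference is in the intersection-theoretic step. The paper works in the basis $(\xi,R)$, proves $\xi^{b-1+j}\cdot R^{b-j}=\binom{b+j}{j}$ by induction from the Grothendieck relation plus a binomial identity, obtains the factorization $a^{b-1}(a+c)^{b-1}(2a+c)=0$, and must then kill the spurious case $c=-2a$ by a separate cohomological computation ($H^0(a\xi-\pi^*\O_{\PP^b}(2a))\cong H^b(S^a\Omega^1_{\PP^b}(2a-b-1))^*=0$, contradicting global generation of $c_1(\E)$). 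You instead work in the basis $(H_1,H_2)$ of the two pullbacks: the $(1,1)$-divisor description gives the nonzero intersection numbers $H_1^bH_2^{b-1}=H_1^{b-1}H_2^b=1$ at once, the expression $\binom{2b-1}{b}\alpha^{b-1}\beta^{b-1}(\alpha+\beta)$ drops out, and nefness of $\det\E$ tested on lines in the fibers of \emph{both} projections gives $\alpha,\beta\ge 0$, so the only cases surviving are $\alpha=0$ or $\beta=0$ and no extra cohomological exclusion is needed. What you pay for this is using the second bundle structure $\pi'$ (and its fiber lines) from the start, which the paper only introduces midway through via $\xi-\pi^*\O_{\PP^b}(1)$; but since the paper itself sets up $\pi'$ via Sato's theorem before the proposition, this is a fair simplification rather than a hidden assumption. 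Your final induction is run on $\H^\vee$ after Serre duality, where the paper runs it on $\G=\H(-b-1)$ directly; these are equivalent, and both land on the fact that an Ulrich bundle on $\PP^b$ is trivial. The closing argument for the last assertion (if $c_1(\E)^s=0$ for some $s\le b$ then $c_1(\E)^{2b-1}=0$, then use the classification and $H_1^s\neq 0$) matches the paper's.
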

\begin{proof} 
The Euler sequence
$$0 \to \O_{\PP^b} \to \O_{\PP^b}(1)^{\oplus (b+1)} \to T_{\PP^b} \to 0$$
gives an inclusion $\PP(T_{\PP^b}) \subset \PP(\O_{\PP^b}(1)^{\oplus (b+1)}) \cong \PP^b \times \PP^b$ and $\xi$ is the restriction of the tautological line bundle of $\PP(\O_{\PP^b}(1)^{\oplus (b+1)})$, that is the restriction of $\O_{\PP^b}(1) \boxtimes \O_{\PP^b}(1)$. Hence $\xi$ is very ample.

Set $n = 2b-1, \F = T_{\PP^b}, L = 0$ and $\G = \O_{\PP^b}(-1)$ in Lemma \ref{exa}. Then 
$$\pi^*(\O_{\PP^b}(b)) = \pi^*(\G((n-b)L+ \det \F))$$ 
and by Lemma \ref{exa}(ii) all we need are the vanishings
\begin{equation}
\label{schn}
H^q(S^k \Omega^1_{\PP^b}(-1))=0 \ \hbox{for} \ q \ge 0 \ \hbox{and} \ 0 \le k \le b-1.
\end{equation}
By \cite[\S 1]{sc} there is an exact sequence
$$0 \to S^k \Omega^1_{\PP^b}(-1) \to \O_{\PP^b}(-k-1)^{\oplus \binom{k+b}{b}} \to \O_{\PP^b}(-k)^{\oplus \binom{k+b-1}{k-1}} \to 0$$
and \eqref{schn} follows immediately. This proves that $\pi^*(\O_{\PP^b}(b))$ is an Ulrich line bundle for $(X, \xi)$.

Now set $R = \pi^*H$, where $H$ is a hyperplane in $\PP^b$ and let $f$ be a fiber of $\pi$. 

We claim that
\begin{equation}
\label{monom}
\xi^{b-1+j} \cdot R^{b-j} = \binom{b+j}{j} \ \hbox{for every} \ 0 \le j \le b.
\end{equation}
We prove this by induction on $j$. If $j=0$ we have
$$\xi^{b-1} \cdot R^b = \xi^{b-1} \cdot f = 1.$$
If $j \ge 1$ recall that we have the relation 
$$\sum\limits_{i=0}^b (-1)^i \pi^*c_i(T_{\PP^b}) \xi^{b-i}=0$$
and since $c_i(T_{\PP^b}) = \binom{b+1}{i} H^i$, this gives
$$\xi^b = \sum\limits_{i=1}^b (-1)^{i+1} \binom{b+1}{i} \xi^{b-i}\cdot R^i$$
and therefore
$$\xi^{b-1+j} \cdot R^{b-j} = \sum\limits_{i=1}^b (-1)^{i+1} \binom{b+1}{i} \xi^{b-1+j-i}\cdot R^{b+i-j}.$$
Using the fact that $R^s=0$ for $s \ge b+1$ and applying the inductive hypothesis, we deduce that
$$\xi^{b-1+j} \cdot R^{b-j} = \sum\limits_{i=1}^j (-1)^{i+1} \binom{b+1}{i} \xi^{b-1+j-i}\cdot R^{b+i-j} = \sum\limits_{i=1}^j (-1)^{i+1} \binom{b+1}{i} \binom{b+j-i}{j-i}.$$
Using the standard notation $\binom{x}{k} = \frac{x(x-1)\ldots (x-k+1)}{k!}$ for any integer $k \ge 1$, we can rewrite the above as
$$\xi^{b-1+j} \cdot R^{b-j} = \sum\limits_{i=1}^j (-1)^{i+1} \binom{b+1}{i} \binom{b+j-i}{b}=\sum\limits_{i=1}^b (-1)^{i+1} \binom{b+1}{i} \binom{b+j-i}{b}.$$
Now for any numerical polynomial $P(x)$ we have that $\sum_{j=0}^n (-1)^j\binom{n}{j} P(j)=0$ if $\deg P <n$ \cite[p.190]{GKP}. Using this we get that the last sum is $\binom{b+j}{j}$ and \eqref{monom} is proved.

Now $c_1(\E) \equiv a \xi + c R$ for some $a, c \in \ZZ$. Since $\E$ is globally generated we have that $c_1(\E)$ is effective, hence $a \ge 0$. Also, using again that $R^s=0$ for $s \ge b+1$, we get
$$0 = c_1(\E)^{2b-1} = \sum\limits_{k=1}^{2b-1} \binom{2b-1}{k} a^kc^{2b-1-k} \xi^k \cdot R^{2b-1-k} = \sum\limits_{j=0}^b \binom{2b-1}{b-1+j} a^{b-1+j}c^{b-j} \xi^{b-1+j} \cdot R^{b-j}$$
and using \eqref{monom} we get that
$$0 = \sum\limits_{j=0}^b \binom{2b-1}{b-1+j} \binom{b+j}{j} a^{b-1+j}c^{b-j} = a^{b-1} \sum\limits_{j=0}^b \binom{2b-1}{b-1+j} \binom{b+j}{j} a^jc^{b-j}.$$
An easy calculation gives that 
$$\sum\limits_{j=0}^b \binom{2b-1}{b-1+j} \binom{b+j}{j} a^jc^{b-j} = \binom{2b-1}{b}(a+c)^{b-1}(2a+c)$$
so that we get that
$$\binom{2b-1}{b}a^{b-1}(a+c)^{b-1}(2a+c)=0$$
that is either $a = 0$ or $a > 0, c = -a, -2a$. But the case $c = -2a$ cannot occur, since then $c_1(\E) \sim a\xi - \pi^*\O_{\PP^b}(2a)$. But 
$$H^0(a \xi - \pi^*\O_{\PP^b}(2a)) \cong H^0(S^a T_{\PP^b}(-2a)) \cong H^b(S^a \Omega^1_{\PP^b}(2a-b-1))^*$$
and the exact sequence
$$0 \to S^a \Omega^1_{\PP^b}(2a-b-1) \to \O_{\PP^b}(a-b-1)^{\oplus \binom{a+b}{a}} \to \O_{\PP^b}(a-b)^{\oplus \binom{a+b-1}{a-1}} \to 0$$
shows that $H^b(S^a \Omega^1_{\PP^b}(2a-b-1))=0$, contradicting the fact that $c_1(\E)$ is globally generated.

Observe that the line bundle $\xi - \pi^*\O_{\PP^b}(1)$ is globally generated and $h^0(\xi - \pi^*\O_{\PP^b}(1)) = h^0(T_{\PP^b}(-1)) = b+1$, so that it gives a morphism $\pi' : X \to \PP^b$. This is just the restriction of the second projection given by the inclusion $\PP(T_{\PP^b}) \subset \PP^b \times \PP^b$. Also note that $\pi'$ is again a $\PP^{b-1}$-bundle structure on $X$. 

In any case, we have that either $a = 0$ and $c_1(\E) = \pi^*\O_{\PP^2}(c)$ or $a > 0, c = -a$ and $c_1(\E) = (\pi')^*\O_{\PP^b}(a)$. We can now apply Lemma \ref{pullback} and deduce that there is a rank $r$ vector bundle $\H$ on $\PP^b$ such that $\E = \pi^*\H$ or $(\pi')^*\H$. In the first case $\E = \pi^*\H$, setting $\F = T_{\PP^b}, L=0$ and $\G = \H(-b-1)$ we have, by Lemma \ref{exa}, that
\begin{equation}
\label{van}
H^q(\G \otimes S^k \Omega^1_{\PP^b})=0 \ \hbox{for} \ q \ge 0, 0 \le k \le b-1.
\end{equation}
Now the Euler sequence gives, when $k \ge 1$, the exact sequence
$$0 \to \G \otimes S^k \Omega^1_{\PP^b}  \to \G(-k)^{\oplus \binom{k+b}{k}} \to \G(-k+1)^{\oplus \binom{k+b-1}{k-1}} \to 0$$
and \eqref{van} implies that $H^q(\G(-k)) = 0$ for $q \ge 0$ and $0 \le k \le b-1$. But then $\G(1)$ is an Ulrich vector bundle for $(\PP^b, \O_{\PP^b}(1))$ and it follows by \cite[Prop.~2.1]{es} (or \cite[Thm.~2.3]{b1}) that $\G(1) \cong \O_{\PP^b}^{\oplus r}$ and therefore $\E \cong (\pi^*(\O_{\PP^b}(b)))^{\oplus r}$. The proof in the other case $\E = (\pi')^*\H$ is analogous. Finally $c_1(\E) = \pi^*(\O_{\PP^b}(br))$ whence for any $1 \le s \le b$ we get that $c_1(\E)^s = b^sr^sR^s \ne 0$.
\end{proof}

\section{Blow-ups at points not on lines}

We now lay out the proof of the main theorem. 

The following result is quite standard, but we include it for lack of reference. For the second assertion see also \cite[Cor.~6]{se}.

\begin{lemma}
\label{scopp} 
Let $X \subseteq \PP^N$ be a smooth irreducible variety of dimension $n \ge 1$ and let $H$ be its hyperplane divisor. Let $x \in X$ be a point and let $\mu_x : \widetilde X \to X$ be the blow-up of $X$ at $x$ with exceptional divisor $E$. Then $\mu_x^*H-E$ is ample if and only if there is no line $L$ such that $x \in L \subseteq X$. Also $\mu_x^*(tH)-E$ is very ample for every $t \ge 2$.
\end{lemma}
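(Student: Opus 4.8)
# Proof Proposal for Lemma \ref{scopp}

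The plan is to handle the two assertions separately, treating the ampleness criterion for $\mu_x^*H - E$ first and then bootstrapping to very ampleness of $\mu_x^*(tH) - E$ for $t \ge 2$.

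For the first assertion, the natural approach is via the Nakai--Moishezon criterion together with the geometry of the blow-up. If there is a line $L$ with $x \in L \subseteq X$, let $\widetilde L$ be the strict transform; then $(\mu_x^*H - E) \cdot \widetilde L = H \cdot L - \operatorname{mult}_x(L) = 1 - 1 = 0$, so $\mu_x^*H - E$ is not ample. Conversely, suppose no such line exists. The divisor $\mu_x^*H - E$ is the pullback of $\O_X(1)$ twisted down by the exceptional divisor, equivalently it is globally generated by the linear system of hyperplanes through $x$ provided this system separates enough; more precisely, $|\mu_x^*H - E|$ is base-point free since hyperplanes through $x$ have no common point on $X \setminus \{x\}$ other than... — one must be slightly careful here, so instead I would argue directly. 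The cleanest route: $\mu_x^*H - E$ is nef because any irreducible curve $C \subseteq \widetilde X$ satisfies $(\mu_x^*H - E)\cdot C = H \cdot \mu_{x*}C - \operatorname{mult}_x(\mu_{x*}C) \ge 0$ if $C \not\subseteq E$ (a subvariety of degree $d$ has multiplicity at most $d$ at any point, with equality forcing it to be a union of lines through $x$ — here a curve of degree $1$, i.e.\ a line), and $(\mu_x^*H - E)\cdot C = -E \cdot C > 0$ if $C \subseteq E$. Under the no-line hypothesis, equality $(\mu_x^*H - E)\cdot C = 0$ for $C \not\subseteq E$ would force $\deg \mu_{x*}C = \operatorname{mult}_x(\mu_{x*}C)$, hence $\mu_{x*}C$ is a line through $x$ in $X$, contradiction; so $\mu_x^*H - E$ is strictly positive on all curves. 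To upgrade nef-and-positive-on-curves to ample one applies the Nakai--Moishezon criterion, which additionally requires $(\mu_x^*H - E)^{\dim Z} \cdot Z > 0$ for every subvariety $Z$; one can deduce this by an induction on dimension, intersecting with general members of $|\mu_x^*H - E|$ (base-point-freeness of this system, or of a multiple, should be established first), or by invoking Seshadri's criterion directly — the Seshadri constant of $\O_X(1)$ at $x$ is $\ge 1$ precisely when no line through $x$ lies on $X$, which is exactly the statement we want and is essentially \cite[Cor.~6]{se} as cited.

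For the second assertion, once we know (say from the first part, applied with the Seshadri-constant viewpoint) that $\mu_x^*H - E$ is ample and globally generated, write $\mu_x^*(tH) - E = (\mu_x^*H - E) + (t-1)\mu_x^*H$. For $t \ge 2$ this exhibits the divisor as (ample and base-point-free) plus (pullback of a very ample divisor, which is base-point-free). I would prove very ampleness by checking that $\mu_x^*(tH) - E$ separates points and tangent vectors. Sections of $\mu_x^*(tH) - E$ correspond to hypersurfaces of degree $t$ in $\PP^N$ passing through $x$, restricted to $X$: such a linear system separates any two points $y \ne z$ of $X \setminus \{x\}$ (take a degree-$t$ hypersurface through $x$ and $y$ but not $z$, which exists for $t \ge 1$ already since $N \ge 1$), separates a point of $X \setminus \{x\}$ from a point of $E$, separates tangent vectors at points of $X \setminus \{x\}$ (quadrics through $x$ suffice), and on $E$ it restricts to $\O_E(1)$ — since $E \cong \PP^{n-1}$ and the restriction of $\mu_x^*(tH) - E$ to $E$ is $-E|_E = \O_{\PP^{n-1}}(1)$, which is very ample on $E$, and it separates $E$ infinitesimally in the normal direction because the coefficient of $E$ is exactly $1$. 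Assembling these verifications gives very ampleness.

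The main obstacle I anticipate is the passage from ``nef and positive on curves'' to genuine ampleness in the first assertion, i.e.\ controlling $(\mu_x^*H - E)^k \cdot Z$ for higher-dimensional $Z$ — this is where one genuinely needs either the Seshadri-constant formulation (citing \cite{se}) or a careful inductive hyperplane-section argument, and it is the crux of the whole lemma; the very ampleness for $t \ge 2$ is then comparatively routine bookkeeping with linear systems of hypersurfaces through a point. A secondary technical point is making sure $|\mu_x^*H - E|$ (or a small multiple) is base-point-free so that the induction has a starting point, but this follows from the fact that the hyperplanes through $x$ cut out only $\{x\}$ set-theoretically near $x$ and the embedding is projectively normal enough, or can simply be absorbed into the Seshadri argument.
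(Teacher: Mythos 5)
Your forward implication and the curve-level analysis are fine, but the step you yourself call the crux is never actually proved: you do not establish $(\mu_x^*H-E)^{\dim Z}\cdot Z>0$ for subvarieties $Z$ of dimension $\ge 2$, and neither of your two proposed escapes closes the gap. The Seshadri route is incorrect as stated: for a very ample $H$ one always has $\deg C\ge \operatorname{mult}_xC$ for every curve $C\ni x$, so $\varepsilon(\O_X(1);x)\ge 1$ holds at every point of every embedded variety, lines or no lines; what you would need is the strict inequality (equivalently, ampleness of $\mu_x^*H-E$), which is exactly the assertion under proof, so "invoking Seshadri's criterion directly" or quoting \cite[Cor.~6]{se} for it is circular in a paper that includes this lemma precisely for lack of reference (and the paper cites \cite{se} only as a "see also" for the very ampleness part). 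The inductive route could be made to work -- cut $Z$ with general members of $|\mu_x^*H-E|$ and reduce to your curve case -- but you neither prove the needed base-point-freeness (it holds because $H$ very ample makes $\mathcal{J}_{\{x\}/X}(1)$ globally generated, so $\mu_x^*H-E$ is globally generated) nor carry out the induction; as written the first assertion is an outline with its key step missing. The second assertion is also only sketched: the paper disposes of it by citing \cite[Thm.~2.1]{bs1}, whereas your direct separation argument asserts rather than proves the delicate points (separating points of $E$, tangent vectors along $E$ and in the normal direction), and the claim that degree-$t$ hypersurfaces through $x$ separate $y$ from $z$ "for $t\ge1$ already" fails for $t=1$ when $z$ lies on the line spanned by $x$ and $y$ -- harmless since $t\ge 2$, but symptomatic of the missing care.

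For comparison, the paper's argument needs no induction and no Seshadri constants: since $\mu_x^*H-E$ is globally generated it is nef, so all top intersections are $\ge 0$ by Kleiman \cite[Thm.~1.4.9]{laz}; if it is not ample, Nakai--Moishezon--Kleiman produces an irreducible $\widetilde V\subseteq\widetilde X$ of dimension $d\ge1$ with $(\mu_x^*H-E)^d\cdot\widetilde V=0$. One checks $\widetilde V\not\subseteq E$ and $x\in V:=\mu_x(\widetilde V)$, so $\widetilde V$ is the blow-up of $V$ at $x$ with exceptional divisor $E_{|\widetilde V}$, and $E_{|\widetilde V}^d=(-1)^{d+1}\operatorname{mult}_x(V)$ \cite[Lemma 5.1.10]{laz} turns the vanishing into $H^d\cdot V=\operatorname{mult}_x(V)$; a variety whose degree equals its multiplicity at a point is a cone with vertex that point, hence contains a line through $x$. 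This is the same "degree equals multiplicity forces lines" fact you use for curves, but applied in all dimensions at once, which is precisely what your proposal is missing.
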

\begin{proof} 
If there is a line $L \subseteq X$ passing through $x$, letting $\widetilde L$ be its strict transform, we have
$$(\mu_x^*H-E) \cdot \widetilde L = H \cdot L - 1 = 0$$
and therefore $\mu_x^*H-E$ is not ample.

Viceversa assume that $\mu_x^*H-E$ is not ample. Since $H$ is very ample, it follows that $\mu_x^*H-E$ is globally generated, hence, for every irreducible subvariety $Z$ of dimension $d \ge 1$ of $\widetilde X$ we have that and $(\mu_x^*H-E)^d \cdot Z \ge 0$ by Kleiman's theorem \cite[Thm.~1.4.9]{laz}. Now $\mu_x^*H-E$ is not ample, hence Nakai-Moishezon-Kleiman's theorem \cite[Thm.~1.2.23]{laz} implies that there is an irreducible subvariety $\widetilde V$ of dimension $d \ge 1$ of $\widetilde X$ such that and 
\begin{equation}
\label{mult1}
(\mu_x^*H-E)^d \cdot  \widetilde V = 0.
\end{equation}
Let $V = \mu_x(\widetilde V)$. It must be that $x \in V$, for otherwise \eqref{mult1} gives that $H^d \cdot V = 0$, a contradiction. Note that it cannot be that $\widetilde V \subset E$ for otherwise we get the contradiction $V=\{x\}$. Hence $\mu_x$ gives an isomorphism $\widetilde V \setminus E \cap \widetilde V \cong V \setminus \{x\}$ and therefore $\widetilde V$ is the strict transform of $V$. Therefore, by \cite[Cor.~II.7.15]{ha}, we find that $\widetilde V$ is  the blow-up of $V$ at $x$ with exceptional divisor $E_{|\widetilde V}$. By \cite[Lemma 5.1.10]{laz} we have that $E_{|\widetilde V}^d = (-1)^{d+1} {\rm mult}_x(V)$ and \eqref{mult1} gives
$$0 = (\mu_x^*H-E)^d \cdot  \widetilde V = H^d \cdot V + (-1)^d(-1)^{d+1} {\rm mult}_x(V)$$
that is 
$$H^d \cdot V = {\rm mult}_x(V).$$
Then $V \subset \PP^N$ is a variety having one point whose multiplicity is the degree of $V$ and it follows that there is a line $L \subseteq V$ passing through $x$. 

Finally the very ampleness of $\mu_x^*(tH)-E$ when $t \ge 2$ follows by \cite[Thm.~2.1]{bs1} since $\mathcal J_{\{x\}/X}(1)$ is globally generated.
\end{proof}

Using this we have the following result. The idea comes in part from \cite[Thm.~0.1]{k}.

\begin{thm}
\label{0reg}
Let $X \subseteq \PP^N$ be a smooth irreducible variety of dimension $n \ge 1$. Let $\E$ be a rank $r$ vector bundle on $X$ that is $0$-regular with respect to $\O_X(t)$ for some $t \ge 1$. Then $c_1(\E)$ has the following positivity property: For every $x \in X$ and for every subvariety $Z \subseteq X$ of dimension $k \ge 1$ passing through $x$, we have that 
\begin{equation}
\label{ineq1}
c_1(\E)^k \cdot Z \ge r^k {\rm mult}_x(Z)
\end{equation}
holds if either $t \ge 2$ or $t=1$ and $x$ does not belong to a line contained in $X$. 

In particular $c_1(\E)$ is ample if either $t \ge 2$ or $t=1$ and there is no line contained in $X$.

Moreover assume that either $t \ge 2$ or $t=1$ and $X$ is not covered by lines. Then $\E$ is big and
\begin{equation}
\label{ineq1'} c_1(\E)^n \ge r^n
\end{equation}
\end{thm}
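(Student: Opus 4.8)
The plan is to translate \eqref{ineq1} into a nefness statement on the blow-up. Let $\mu_x:\widetilde X\to X$ be the blow-up of $X$ at $x$, with exceptional divisor $E$, let $H$ be the hyperplane class, and set
$$\widetilde{\E}:=\mu_x^*\E\otimes\mathcal O_{\widetilde X}(-E),\qquad \mathcal L:=\mu_x^*(tH)-E.$$
Given $Z\subseteq X$ of dimension $k$ through $x$, let $\widetilde Z$ be its strict transform. Exactly as in the proof of Lemma \ref{scopp} one has $E^k\cdot\widetilde Z=(-1)^{k+1}{\rm mult}_x(Z)$ by \cite[Lemma 5.1.10]{laz}, while $\mu_x^*c_1(\E)$ restricts to $0$ on $E$; expanding $(\mu_x^*c_1(\E)-rE)^k\cdot\widetilde Z$ all mixed terms vanish and one is left with $c_1(\E)^k\cdot Z-r^k{\rm mult}_x(Z)$. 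Since $c_1(\widetilde{\E})=\mu_x^*c_1(\E)-rE$, it therefore suffices to prove that $\widetilde{\E}$ is globally generated (hence nef, hence with nef first Chern class): this at once yields \eqref{ineq1}.

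To prove $\widetilde{\E}$ is globally generated I would check that it is $0$-regular with respect to $\mathcal L$ and invoke Mumford's regularity theorem \cite[\S 1.8]{laz}. Two things are needed. First, $\mathcal L$ must be ample and globally generated: ampleness holds by Lemma \ref{scopp} (for $t=1$ because $x$ lies on no line of $X$; for $t\ge 2$ it is even very ample), and global generation holds because $\mathcal J_{\{x\}/X}\otimes\mathcal O_X(t)$ is globally generated --- $\mathcal O_X(t)$ being very ample separates $x$ from any other point and separates tangent directions at $x$ --- so that its pull-back surjects onto $\mathcal O_{\widetilde X}(\mathcal L)$. This is precisely the step where the hypothesis on lines (or $t\ge2$) enters. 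Second, for $1\le i\le n$ one has
$$\widetilde{\E}\otimes\mathcal L^{\otimes(-i)}\cong\mu_x^*\bigl(\E(-it)\bigr)\otimes\mathcal O_{\widetilde X}\bigl((i-1)E\bigr),$$
and since $R^j(\mu_x)_*\mathcal O_{\widetilde X}(mE)=0$ for $j>0$ and $0\le m\le n-1$ (with $(\mu_x)_*\mathcal O_{\widetilde X}(mE)=\mathcal O_X$), the projection formula gives $H^i(\widetilde X,\widetilde{\E}\otimes\mathcal L^{\otimes(-i)})=H^i(X,\E(-it))$, which vanishes exactly by $0$-regularity of $\E$ with respect to $\mathcal O_X(t)$. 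Thus $\widetilde{\E}$ is $0$-regular with respect to $\mathcal L$, and \eqref{ineq1} is proved.

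The remaining assertions are then formal. If $t\ge2$, or $t=1$ and $X$ contains no line, then \eqref{ineq1} holds at every point of $X$, so for every subvariety $Z$ and any $x\in Z$ we get $c_1(\E)^{\dim Z}\cdot Z\ge r^{\dim Z}{\rm mult}_x(Z)>0$, whence $c_1(\E)$ is ample by Nakai--Moishezon--Kleiman. If moreover $X$ is not covered by lines (or $t\ge2$), pick $x$ lying on no line of $X$ and apply \eqref{ineq1} with $Z=X$, $k=n$: since ${\rm mult}_x(X)=1$ this is exactly \eqref{ineq1'}, so $c_1(\E)^n>0$. Finally $\E$ is big: it is globally generated and $\det\E$ is nef (globally generated) and big (by \eqref{ineq1'}), so the morphism $\Phi_{\E}$ of Lemma \ref{pullback} pulls the ample Pl\"ucker class back to $\det\E$ and is therefore generically finite onto its image, whence $\mathcal O_{\PP(\E)}(1)$ is big.

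I expect the crux to be the second paragraph: recognizing $\mathcal L=\mu_x^*(tH)-E$ as the correct polarization on $\widetilde X$ and verifying that it is simultaneously ample \emph{and} globally generated --- which is where Lemma \ref{scopp} and the line hypothesis are really used --- together with the cohomology bookkeeping identifying $H^i(\widetilde X,\widetilde{\E}\otimes\mathcal L^{\otimes(-i)})$ with $H^i(X,\E(-it))$; each step is elementary, but setting it up correctly is the heart of the matter. (For $n=1$ the blow-up is trivial and $c_1(\E)\cdot X=\deg\E\ge r$ follows directly from $H^1(\E(-t))=0$ and Riemann--Roch, $X$ not being a line.)
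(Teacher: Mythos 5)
Your treatment of \eqref{ineq1}, of the ampleness statement and of \eqref{ineq1'} is essentially the paper's own argument: same blow-up, same twisted bundle $(\mu_x^*\E)(-E)$, same transfer of $0$-regularity via the vanishing of $R^j(\mu_x)_*\O_{\widetilde X}(mE)$, Mumford's theorem, nefness of $\mu_x^*c_1(\E)-rE$, and the multiplicity computation with $(E_{|\widetilde Z})^k$ (the paper quotes Kleiman's theorem for the non-negativity of the $k$-fold intersection with $\widetilde Z$, which you use implicitly). The genuine gap is in the last step, the bigness of $\E$. The implication you invoke --- $\E$ globally generated with $\det\E$ nef and big, hence $\Phi_\E$ generically finite, hence $\O_{\PP(\E)}(1)$ big --- is false: bigness of $\E$ means that the map defined by $|\O_{\PP(\E)}(1)|$ on the $(n+r-1)$-dimensional variety $\PP(\E)$ is generically finite, which is strictly stronger than generic finiteness of $\Phi_\E$ on $X$. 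A counterexample is $\E=T_{\PP^2}(-1)$ on $X=\PP^2$: it is globally generated (even $0$-regular with respect to $\O_{\PP^2}(1)$), $\det\E=\O_{\PP^2}(1)$ is ample, and $\Phi_\E$ is injective (by the Euler sequence the kernel of $H^0(\E)\to\E_p$ is exactly the line corresponding to $p$), yet $c_1(\E)^2-c_2(\E)=0$, so the top self-intersection of $\O_{\PP(\E)}(1)$ vanishes and $\E$ is not big: here $\PP(\E)$ is the flag variety mapping onto the dual plane with $\PP^1$-fibers. Since $\PP^2$ is covered by lines this does not contradict the theorem, but it shows your argument cannot establish bigness; in particular \eqref{ineq1'} alone, i.e.\ $c_1(\E)^n\ge r^n$, never implies bigness of a bundle of rank $r\ge 2$.

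What the paper does at this point is use the full strength of the nefness of $(\mu_x^*\E)(-E)$, not just of its determinant: by the Fulton--Lazarsfeld/Demailly--Peternell--Schneider positivity of Schur polynomials of nef bundles \cite[Thm.~2.5]{dps}, applied to the partition $(1,\ldots,1)$ whose Schur polynomial is $s_n$ of the dual \cite[Exa.~8.3.5]{laz}, one gets $s_{(1,\ldots,1)}((\mu_x^*\E)(-E))\ge 0$; expanding the twist by $-E$ this yields $s_n(\E^*)=(-1)^n s_n(\E)\ge \binom{r-1+n}{r-1}>0$, which is precisely the positivity of the top self-intersection of $\O_{\PP(\E)}(1)$ and hence bigness of $\E$. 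So your proof needs this (or an equivalent) extra ingredient for the final claim; the remaining steps are correct and coincide with the paper's.
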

\begin{proof}
Let $H$ be the hyperplane divisor of $X$. Let $x \in X$ be a point and assume that either $t \ge 2$ or $t=1$ and $x$ does not belong to a line contained in $X$. Let $\mu : \widetilde X \to X$ be the blow-up of $X$ at $x$ with exceptional divisor $E$. Since $H$ is very ample, it follows that $\widetilde H := \mu^*(tH)-E$ is globally generated and it is also ample by Lemma \ref{scopp}. As is well-known (see for example \cite[Proof of Lemma 1.4]{bel}), for every integer $s$ such that $0 \le s \le n-1$ we have that
\begin{equation}
\label{rj2}
R^j \mu_* \O_{\widetilde X}(sE) =  \begin{cases} \O_X & {\rm if} \ j=0 \\ 0 & {\rm if} \  j>0  \end{cases}.
\end{equation}
Then, for every $i>0$, \eqref{rj2} and the Leray spectral sequence give
$$H^i(\widetilde X, (\mu^*\E)(-E)(-i\widetilde H)) = H^i(\widetilde X,\mu^*(\E(-it))((i-1)E)) \cong H^i(X,\E(-it))=0$$
since $\E$ is $0$-regular with respect to $\O_X(t)$. Therefore $(\mu^*\E)(-E)$ is $0$-regular with respect to $\widetilde H$ and it follows by \cite[Thm.~1.8.5]{laz} that $(\mu^*\E)(-E)$ is globally generated. But then also $c_1(\mu^*\E(-E)) = \mu^*(c_1(\E))-rE$ is globally generated, whence nef. Now let $Z \subseteq X$ be a subvariety of dimension $k \ge 1$ passing through $x$ and let $\widetilde Z$ be its strict transform on $\widetilde X$. By Kleiman's theorem \cite[Thm.~1.4.9]{laz} we deduce that
\begin{equation}
\label{pri}
(\mu^*(c_1(\E))-rE)^k \cdot \widetilde Z \ge 0.
\end{equation}
Now, using \cite[Lemma 5.1.10]{laz}, we see that $(E_{|\widetilde Z})^k = (-1)^{k+1} {\rm mult}_x(Z)$ and \eqref{pri} implies that
$$0 \le (\mu^*(c_1(\E))-rE)^k \cdot \widetilde Z = c_1(\E)^k \cdot Z + (-1)^kr^k(-1)^{k+1} {\rm mult}_x(V) = c_1(\E)^k \cdot Z - r^k {\rm mult}_x(Z).$$
This proves \eqref{ineq1}. Now if either $t \ge 2$ or $t=1$ and there is no line contained in $X$, then $c_1(\E)$ is ample by Nakai-Moishezon-Kleiman's criterion \cite[Thm.~1.2.23]{laz}.

Assume that either $t \ge 2$ or $t=1$ and $X$ is not covered by lines. Choosing $Z=X$ we get that
$$c_1(\E)^n \ge r^n.$$
Moreover by \cite[Thm.~2.5]{dps} we have, considering the partition $(1,\ldots,1)$ of $n$ and the corresponding Schur polynomial \cite[\S 8.3]{laz}, that 
\begin{equation}
\label{seg}
s_{(1,\ldots,1)}((\mu^*\E)(-E)) \ge 0.
\end{equation}
Denoting with $s_n$ the $n$-th Segre class, we know by \cite[Exa.~8.3.5]{laz} that 
$$s_{(1,\ldots,1)}((\mu^*\E)(-E))=s_n(((\mu^*\E)(-E))^*)$$ 
hence we deduce from \eqref{seg} that
$$(-1)^ns_n((\mu^*\E)(-E)) \ge 0$$
and therefore
$$(-1)^n \sum\limits_{j=0}^n \binom{r-1+n}{r-1+j} s_j(\mu^*\E)E^{n-j} \ge 0$$
that is
$$s_n(\E^*) = (-1)^n s_n(\E) \ge \binom{r-1+n}{r-1} > 0$$
and then $\O_{\PP(\E)}(1)$ is big, namely $\E$ is big.
\end{proof}

\begin{remark}
\label{altrapos}
With the same proof, one can show that if $\E$ is a rank $r$ vector bundle $0$-regular with respect to $\O_X(1)$, $x \in X$ does not belong to a line contained in $X$, $k \ge 1$ and $Z \subseteq X$ is a $k$-dimensional subvariety passing through $x$, then 
$$s_k(\E^*) \cdot Z \ge \binom{r-1+k}{r-1} {\rm mult}_x(Z).$$
Moreover, if $X$ is not covered by lines, also $s_{\lambda}((\mu^*\E)(-E)) \ge 0$ for any degree $n$ Schur polynomial associated to a partition $\lambda$ \cite[\S 8.3]{laz}. This in turn translates into positivity of polynomials in Chern classes of $\E$. For example if $\lambda=(n,0,\ldots,0)$ then one gets that $c_n(\E) \ge \binom{r}{n}$ when $r \ge n$.
\end{remark}

Now Theorem \ref{main} follows.

\begin{proof}[Proof of Theorem \ref{main}]

\hskip 3cm

Let $\E$ be a rank $r$ Ulrich vector bundle for $(X,\O_X(t))$. Then $\E$ is $0$-regular with respect to $\O_X(t)$, so that we can apply Theorem \ref{0reg} and conclude that \eqref{ineq} holds and that $c_1(\E)$ is ample if either $t \ge 2$ or $t=1$ and there is no line contained in $X$. Now assume that $t \ge 2$ or $t=1$ and $X$ is not covered by lines. Again Theorem \ref{0reg} implies that \eqref{ineq'} holds and that $\E$ is big. Moreover, if $r \ge 2$ we can apply \cite[Thm.~1]{si} since $\E$ is globally generated. Note that it cannot be that $\E \cong \O_X^{\oplus (r-1)} \oplus \det \E$, for otherwise $\O_X$ would be an Ulrich line bundle for $(X,\O_X(t))$. But then Lemma \ref{c1=0} would imply that $(X,\O_X(t))=(\PP^n, \O_{\PP^n}(1))$, hence $t=1$ and $X$ is covered by lines, a  contradiction. Therefore \cite[Thm.~1]{si} gives that $c_1(\E)^n \ge h^0(\E) - r = r(d-1)$ by \cite[(3.1)]{b1}. Thus also \eqref{ineq''} holds.
\end{proof}

\section{The case of surfaces}

We are now ready to prove our classification in the case of surfaces.

\begin{proof}[Proof of Theorem \ref{main2}]

\hskip 3cm

If $(S, \O_S(1), \E)$ is as in (i) or (ii), it follows by \cite[Prop.~2.1]{es} (or \cite[Thm.~2.3]{b1}) and Lemma \ref{exa}(i) that $\E$ is a rank $r$ Ulrich vector bundle on $S$ with $c_1(\E)^2=0$.

Vice versa let $\E$ be a rank $r$ Ulrich vector bundle on $S$ such that $c_1(\E)^2=0$. 

If $(S, \O_S(1)) = (\PP^2,\O_{\PP^2}(1))$ it follows by \cite[Prop.~2.1]{es} (or \cite[Thm.~2.3]{b1}) that $\E = \O_{\PP^2}^{\oplus r}$, so that we are in case (i).

Assume from now on that $(S, \O_S(1)) \ne (\PP^2,\O_{\PP^2}(1))$. We will prove that $(S,\O_S(1), \E)$ is as in (ii).

By Remark \ref{c1=0} we have that $c_1(\E) \ne 0$. We now claim that $c_2(\E)=0$.

Since $\E$ is $0$-regular with respect to $\O_S(1)$, it is globally generated by \cite[Thm.~1.8.5]{laz}. It follows by \cite[Thm.~2.5]{dps} that $0 \le c_2(\E) \le c_1(\E)^2=0$, that is $c_2(\E)=0$.

Alternatively, to show that $c_2(\E)=0$, consider the map
$$\lambda_{\E} : \Lambda^r H^0(\E) \to H^0(\det \E).$$
As shown in the proof of Lemma \ref{pullback}, the linear system $|\Im \lambda_{\E}|$ is base-point free. Choose now a general subspace $V \subseteq H^0(\E)$ of dimension $r$, so that we get a morphism $\varphi : V \otimes \O_S \to \E$ whose degeneracy locus $D = D_{r-1}(\varphi)$ is a smooth curve by \cite[Statement(folklore), \S 4.1]{ba}. Since $\varphi$ is generically an isomorphism we get, as in \cite[Proof of Prop.~1.1]{gl}, an exact sequence of sheaves
\begin{equation}
\label{l1}
0 \to V \otimes \O_S \to \E \to \L \to 0
\end{equation}
and its dual
\begin{equation}
\label{l2}
0 \to \E^* \to V^* \otimes \O_S \to \L' \to 0
\end{equation}
where $\L$ and $\L'$ are two line bundles on $D$ with $\L \cong N_{D/S} \otimes (\L')^{-1}$. Note that 
$$c_2(\E) = \deg \L'.$$
Now both $\L$ and $\L'$ are globally generated by \eqref{l1} and \eqref{l2}. Since $D^2=0$ we have that $D = \Gamma_1 \cup \ldots \cup \Gamma_s$ with all $\Gamma_i$ smooth irreducible, $\Gamma_i \cap \Gamma_j = \emptyset$ for $i \neq j$ and $\Gamma_i \equiv \Gamma_j$, so that $D \equiv s \Gamma_i$ for all $i$ and therefore $\Gamma_i^2=0$ for all $i$. Since $\O_S(D)$ is globally generated we get that also $\O_S(\Gamma_i)_{|\Gamma_i} \cong \O_S(D)_{|\Gamma_i}$ is globally generated and has degree $0$, whence $\O_S(\Gamma_i)_{|\Gamma_i} \cong \O_{\Gamma_i}$. Also
$$\L_{|\Gamma_i} \cong (N_{D/S} \otimes (\L')^{-1})_{|\Gamma_i} \cong (\L'_{|\Gamma_i})^{-1}$$
and both $\L_{|\Gamma_i}$ and $\L'_{|\Gamma_i}$ are globally generated, whence $\L_{|\Gamma_i} \cong \L'_{|\Gamma_i} \cong \O_{\Gamma_i}$. Therefore
$$\L \cong \L' \cong \bigoplus\limits_{i=0}^s \O_{\Gamma_i}$$
and this gives that $c_2(\E) = \deg \L' = 0$.

Thus the claim that $c_2(\E) = 0$ is proved and we proceed with the proof.

Let $H$ be a hyperplane divisor on $S$. 

Since $c_1(\E)^2=0$ we have that \eqref{ineq'} does not hold, hence Theorem \ref{main} implies that $S \subseteq \PP^N = \PP H^0(H)$ is covered by lines. By \cite[Thm.~1.4]{lp} it follows that $(S,H)$ is a linear $\PP^1$-bundle over a smooth curve (possibly with two $\PP^1$-bundle structures in the case of $\PP^1 \times \PP^1$). Hence there is a smooth curve $B$, a divisor $L$ on $B$, a rank $2$ vector bundle $\F$ on $B$ (that we can assume to be normalized as in \cite[V.2.8.1]{ha}) such that $S \cong \PP(\F)$, $H = \xi + \pi^*L$, where $\pi: S \to B$ is the projection and $\xi$ is the tautological line bundle on $\PP(\F)$. 

We claim that there is a $\PP^1$-bundle structure $p : S \to C$ onto a smooth curve $C$ and a line bundle $M$ on $C$ such that $\det \E = p^*M$. As a matter of fact we will have that either $p =\pi$ and $C=B$ or $S \cong \PP^1 \times \PP^1, C = \PP^1$ and $p : S \to \PP^1$ is the other projection (besides $\pi$).  

To this end observe that we can write $\det \E = a \xi + \pi^*M$ for some $a \in \ZZ$ and some line bundle $M$ on $B$. Then there is $b \in \ZZ$ such that $\det \E \equiv a \xi + b f$, where $f$ is a fiber of $\pi$. Set $e = -\deg \F, l = \deg L$ and $g = g(B)$, so that $H \equiv \xi +lf$ and $K_S \equiv -2\xi + (2g-2-e)f$. Since $(\det \E)^2=0$ we get that $-a^2e + 2ab = 0$ and therefore either $a=0$ or $b = \frac{ae}{2}$. But $\det \E$ is globally generated, hence $a= \det \E \cdot f \ge 0$ and if $a > 0$ then $- \frac{ae}{2} = \det \E \cdot \xi \ge 0$, that is $e \le 0$. Moreover observe that, since $c_1(\E) \ne 0$, we have that $H \cdot \det \E > 0$, and we deduce that either 
\begin{equation}
\label{caso1}
a=0 \ \hbox{and} \ b > 0
\end{equation}
or 
\begin{equation}
\label{caso2}
a > 0, e \le 0, b = \frac{ae}{2} \ \hbox{and} \ l > \frac{e}{2}.
\end{equation}
Suppose we are in case \eqref{caso2}. By \cite[Prop.~2.2]{c1} we have that
\begin{equation}
\label{uno}
c_1(\E) \cdot H = \frac{r}{2}(3H^2+H \cdot K_S) \ \hbox{and} \ c_2(\E) = \frac{1}{2}(c_1(\E)^2-c_1(\E) \cdot K_S) - rH^2 + r \chi(\O_S).
\end{equation}
Since $H^2=2l-e$ and $H \cdot K_S = 2g-2+e-2l$, using $c_1(\E)^2=c_2(\E)=0$ and replacing in \eqref{uno} we get
$$a(l - \frac{e}{2}) = r(2l-e+g-1) \ \hbox{and} \ a(1-g) = r(2l-e+g-1).$$
These give
$$a(l - \frac{e}{2}+g-1)=0$$
that is $l=\frac{e}{2}-g+1$. Since $l > \frac{e}{2}$ we get that $g=0$. As is well-known \cite[Cor.~V.2.13]{ha}, when $g=0$, we have that $e \ge 0$. Therefore $e=0, l = 1$ and $S \cong \PP^1 \times \PP^1$.

We conclude that either we are in case \eqref{caso1} and $\det \E \cong \pi^*M$ for some line bundle $M$ on $B$ (in this case we set $C=B$ and $p=\pi$) or we are in case \eqref{caso2} and then $B \cong \PP^1$ and $\det \E \cong p^*(\O_{\PP^1}(a))$ where $p : S \to \PP^1$ is the other projection besides $\pi$ (in this case we set $C=\PP^1, L = \O_{\PP^1}(1)$ and $M = \O_{\PP^1}(a)$).

Thus the claim is proved and Lemma \ref{pullback} implies that there is a rank $r$ vector bundle $\H$ on $C$ such that $\E \cong p^*\H$. Set $\G = \H(-L-\det \F)$, so that $\E \cong p^*(\G(L+ \det \F))$. Applying Lemma \ref{exa} to $p$ we deduce that $H^q(\G(-L))=0$ for $q \ge 0$.

Thus $(S,\O_S(1), \E)$ is as in (ii).
\end{proof}

\section{The case of threefolds}

In both proofs we will use a result of Lanteri and Palleschi together with a result of Fujita.

\subsection{A quick guide to threefolds covered by lines}
\label{lapa}

\hskip 3cm

Let $X \subseteq \PP^N$ be a smooth irreducible threefold covered by lines, let $x \in X$ be a general point and let $\H_x$ be the Hilbert scheme of lines contained in $X$ and passing through $x$. All the consequences listed below follow from \cite[Thm.~1.4]{lp} and \cite[Introduction]{fu}. To simplify the list, following our purposes, we also assume that $X$ has Picard rank $\rho(X) \ge 2$. 

We have that $\dim \H_x \le 1$ with equality if and only if 
\begin{itemize}
\item [(1)] $(X, \O_X(1))$ is a linear $\PP^2$-bundle over a smooth curve.
\end{itemize}
Assume that $(X, \O_X(1))$ is not as in (1). Then $(X, \O_X(1))$ is
either one of the following
\begin{itemize}
\item [(2)] $(\PP^1 \times \PP^1 \times \PP^1, \O_{\PP^1}(1) \boxtimes \O_{\PP^1}(1) \boxtimes\O_{\PP^1}(1))$;
\item [(3)] $(\PP(T_{\PP^2}), \O_{\PP(T_{\PP^2})}(1))$;
\item [(4)] a linear quadric fibration over a smooth curve;
\item [(5)] a linear $\PP^1$-bundle over a smooth surface.
\end{itemize}
Note that the cases (2)-(3) above are listed as (e) in \cite[Thm.~1.4]{lp}, while all the other cases in \cite[Thm.~1.4(e)]{lp} have $\rho(X) = 1$, hence they do not appear in our list, except for the Del Pezzo threefold of degree $7$, that is also of type (5).

\subsection{Proof of Theorems}

\begin{proof}[Proof of Theorem \ref{main3}]

\hskip 3cm
 
If $(X, \O_X(1), \E)$ is as in (i), (ii) or (iii), it follows by \cite[Prop.~2.1]{es} (or \cite[Thm.~2.3]{b1}) and Lemma \ref{exa} that $\E$ is a rank $r$ Ulrich vector bundle on $S$ with $c_1(\E)^2=0$.

Now suppose that $\E$ is a rank $r$ Ulrich vector bundle on $X$ with $c_1(\E)^2=0$. 

If $(X,\O_X(1)) = (\PP^3,\O_{\PP^3}(1))$ it follows by \cite[Prop.~2.1]{es} (or \cite[Thm.~2.3]{b1}) that $\E = \O_{\PP^3}^{\oplus r}$, so that we are in case (i).

Assume from now on that $(X,\O_X(1)) \ne (\PP^3,\O_{\PP^3}(1))$. We will prove that $(X, \O_X(1), \E)$ is as in (ii) or (iii).

By Remark \ref{c1=0} we have that $c_1(\E) \ne 0$. Moreover by Lemma \ref{pic} we have that $\rho(X) \ge 2$.

Since $c_1(\E)^3=0$ we have that \eqref{ineq'} does not hold, hence Theorem \ref{main} implies  that $X$ is covered by lines. Therefore $(X,\O_X(1))$ is as in (1)-(5) in \ref{lapa}.
 
In case (1) we know by Lemma \ref{cs} that $(X,\O_X(1),\E)$ is as in (ii). In cases (2) and (3) we know by Propositions \ref{qf} and \ref{exa1} that $c_1(\E)^2 \ne 0$, hence these cases do not occur.

As case (4) is excluded by hypothesis, it remains to study case (5) with $\Delta(\F) \ne 0$. Hence we have a $\PP^1$-bundle structure $\pi : X \to B$. We let $\xi$ be the tautological line bundle.

We claim that $\det \E = \pi^*M$ for some line bundle $M$ on $B$.

We can write $\det \E = a \xi + \pi^*M$ for some $a \in \ZZ$ and some line bundle $M$ on $B$.

Assume that $a \ne 0$. We have 
$$0 = c_1(\E)^2 = a^2 \xi^2 + 2a \xi \pi^*M + (\pi^*M)^2.$$
Since $\F$ has rank $2$ we know that $\xi^2 = \xi \pi^*c_1(\F) - \pi^*c_2(\F)$ and we get
$$(a^2 \pi^*c_1(\F) + 2a \pi^*M) \xi + \pi^*(M^2) - a^2 \pi^*c_2(\F) = 0$$
that is
\begin{equation}
\label{equaz}
a \pi^*(ac_1(\F) + 2M) \xi + \pi^*(M^2 - a^2 c_2(\F)) = 0.
\end{equation}
If $ac_1(\F) + 2M \not\equiv 0$, we can find an ample divisor $A$ on $B$ such that $A \cdot (ac_1(\F) + 2M) \ne 0$. Intersecting in \eqref{equaz} with $\pi^*A$
we get the contradiction
$$a A \cdot (ac_1(\F) + 2M) = 0.$$
Therefore $ac_1(\F) + 2M \equiv 0$ and \eqref{equaz} gives that $M^2 = a^2 c_2(\F)$. But then
$$a^2 \Delta(\F)=4a^2c_2(\F)-a^2c_1(\F)^2=0$$
again a contradiction. Therefore $a=0$ and then $\det \E = \pi^*M$.

Thus the claim is proved and Lemma \ref{pullback} implies that there is a rank $r$ vector bundle $\H$ on $B$ such that $\E \cong \pi^*\H$. Setting $\G = \H(-L-\det \F)$, we find that $\E \cong \pi^*(\G(L+ \det \F))$ and Lemma \ref{exa} implies that $H^q(\G(-L))=H^q(\G(-2L) \otimes \F^*)=0$ for $q \ge 0$. Hence we are in case (iii).
\end{proof}

\begin{proof}[Proof of Theorem \ref{main4}]

\hskip 3cm

If $(X, \O_X(1), \E)$ is as in (i) or (ii), it follows by Propositions \ref{qf} and \ref{exa1} that $\E$ is a rank $r$ Ulrich vector bundle on $X$ with $c_1(\E)^3=0$ and $c_1(\E)^2 \ne 0$. 
 
Now suppose that $\E$ is a rank $r$ Ulrich vector bundle on $X$ with $c_1(\E)^3=0$ and $c_1(\E)^2 \ne 0$. 

Since $c_1(\E)^3=0$ we have by Lemma \ref{pic} that $\rho(X) \ge 2$. Also \eqref{ineq'} does not hold, whence Theorem \ref{main} implies that $X$ is covered by lines.  Therefore $(X,\O_X(1))$ is as in (1)-(5) in \ref{lapa}.

In cases (2) and (3) we can apply Propositions \ref{qf} and \ref{exa1} and conclude that $(X, \O_X(1), \E)$ is as in (i) or (ii). 

Cases (4) and (5) are excluded by hypothesis unless $(X,\O_X(1))$ is also a Del Pezzo threefold, thus belonging to cases (2) or (3).

In the remaining case (1) we have a $\PP^2$-bundle structure $\pi : X \cong \PP(\F) \to B$ and $\deg \F < 0$ by hypothesis.  We let $\xi$ be the tautological line bundle. We will first prove that $\det \E = \pi^*M$ for some line bundle $M$ on $B$.

We can write 
$$\det \E = a \xi + \pi^*M$$ 
for some $a \in \ZZ$ and some line bundle $M$ on $B$. Then there is $b \in \ZZ$ such that $\det \E \equiv a \xi + b f$, where $f$ is a fiber of $\pi$. Set $e = -\deg \F > 0$ and suppose that $a \ne 0$. We have
$$0 = c_1(\E)^3 = a^2(-ae + 3b)$$
and therefore $b = \frac{ae}{3}$ and 
$$c_1(\E) = a(\xi + \frac{e}{3}f).$$ 
Since $\F$ is normalized we have that $\xi$ is effective, hence
$$0 \le c_1(\E)^2 \cdot \xi = - a^2 \frac{e}{3} < 0$$
a contradiction.

Therefore $a=0$, that is $\det \E = \pi^*M$. By Lemma \ref{pullback} we get that there is a rank $r$ vector bundle $\H$ on $B$ such that $\E \cong \pi^*\H$. But then $c_1(\E)^2 = 0$, so that this case does not occur.  
\end{proof}

\end{document}